\newtheorem{theorem}{Theorem}
\newtheorem{definition}[theorem]{Definition}
\newtheorem{lemma}[theorem]{Lemma}
\newtheorem*{lemma*}{Lemma}
\newtheorem{proposition}[theorem]{Proposition}
\newtheorem{corollary}[theorem]{Corollary}
\newtheorem{remark}[theorem]{Remark}
\newcommand{\Z}{\mathbb{Z}}
\newcommand{\R}{\mathbb{R}}
\newcommand{\C}{\mathbb{C}}
\newcommand{\E}{\mathbb{E}}
\title[The Ginibre ensemble and the GFF]{On the logarithm of the characteristic polynomial of the Ginibre ensemble}
\author[C. Webb]{Christian Webb}
\address{Department of mathematics and systems analysis, Aalto University, PO Box 11000, 00076 Aalto, Finland}
\email{christian.webb@aalto.fi}
\date{\today}
\begin{document}

\begin{abstract}
We prove a slightly sharper version of a result of Rider and Vir\'ag who proved that after centering, the logarithm of the absolute value of the characteristic polynomial of the Ginibre ensemble converges in law to the Gaussian Free Field on the unit disk with free boundary conditions (and continued harmonically outside of it). Using their results on the linear statistics of the Ginibre ensemble, we prove a result that removes the ambiguity concerning the "constant part" or zero mode of the field.
\end{abstract}

\maketitle

\section{Introduction}

The main goal of this note is to make a small extension to a result of Rider and Vir\'ag in \cite{rv}. They prove that when one considers the (centered) logarithm of the absolute value of the characteristic polynomial of the Ginibre ensemble as a random distribution in a suitable space of generalized functions, it converges in the sense of finite dimensional distributions to a version of the 2-dimensional Gaussian Free Field, namely that with free boundary conditions on the unit disk and harmonically extended outside of the disk.

\vspace{0.3cm}

The extension we consider has to do with the fact that the space of distributions they consider can not distinguish between two distributions that differ by a constant - their space of distributions is the dual of the homogeneous Sobolev space $\dot{\mathcal{H}}^1(\C)$ (see e.g. \cite{logcor} for a survey on log-correlated fields some of which are defined only up to a constant). In addition to their space of distributions, their proof (which is based on studying linear statistics of the random matrices) does not distinguish between the logarithm of the absolute value of the characteristic polynomial and some (possibly random) constant translate of it. Our goal is to study the logarithm of the absolute value of the characteristic polynomial as an element of a space of distributions where one can remove this ambiguity in the "constant part" of the field. We also prove convergence in the weak sense which is stronger than the convergence in the sense of finite dimensional distributions in the infinite dimensional setting. We mainly make use of the results of \cite{rv} as well as fairly standard properties of the eigenfunctions of the Laplacian on the unit disk. Our approach is strongly motivated by a similar argument for the GUE that appears in \cite{fks}. Similar results might be possible for more general random matrix models such as those studied in \cite{ahm}.

\vspace{0.3cm}
  
Apart from it being slightly more satisfactory to remove this ambiguity in the constant part of the field, the motivation for studying the "constant part" of the field comes from recent developments in the study of log-correlated Gaussian fields. It has been discovered that their geometry is best studied through random measures which are obtained formally by exponentiating the field. The study of such measures is the theory of Gaussian Multiplicative chaos going back to Kahane \cite{kahane}. For a recent review, see e.g. \cite{gmc} and for a concise proof for the existence and uniqueness of multiplicative chaos measures see \cite{berestycki}. These measures play a role for example in the mathematical study of Liouville quantum gravity (see e.g. \cite{dskpz,dkrv}), random planar curves constructed by conformal welding \cite{ajks,sheffield}, quantum Loewner evolution \cite{qle}, and other random matrix models (see \cite{webb}, based on work in \cite{dik,ck}).

\vspace{0.3cm}

When exponentiating a log-correlated field, one regularizes it through a cut-off and studies how one must normalize the exponential of the regularized field to obtain a non-trivial limit when one removes the regularization. If the regularization of the field provided by the random matrix model possessed a very ill-behaved constant part (for example if the constant part of the field was a non-trivial non-gaussian random variable which exploded as the size of the matrix increased), this random variable would have to be dealt with separately in the exponentiation process. We will prove that there is in fact no problem with the constant part of the field, and this note can be seen as a small step in the problem of studying the relationship between the characteristic polynomial of the Ginibre ensemble and multiplicative chaos.
  
\vspace{0.3cm}

We start this note with a quick overview of some properties of the eigenfunctions of the Laplacian on the unit disk and related Sobolev spaces, we then go on to represent $z\mapsto \log |z-w|$ as an element of a Sobolev space. Next we'll recall the Ginibre ensemble and the main results of \cite{rv}. Finally we define the relevant log-correlated Gaussian Field and prove convergence of the recentered logarithm of the absolute value of the characteristic polynomial of the Ginibre ensemble to the log-correlated Gaussian field.

\vspace{0.3cm}

\bf Acknowledgements: \rm The author was supported by the Academy of Finland and wishes to thank N. Berestycki, Y. Fyodorov, A. Kupiainen, M. Nikula, R. Rhodes, E. Saksman, and V. Vargas for discussions related to this note.

\section{Eigenfuntions of the Laplacian on the unit disk and Sobolev spaces}

Let us first recall some basic facts about the eigenfunctions of the Laplacian on the unit disk, which we denote by $\mathbb{U}$.

\begin{definition}
Let $(e_{n,k})_{n\in \Z,k\in \Z_+}$ be the  eigenfunctions of the Laplacian on the unit disk with zero Dirichlet boundary conditions with unit $L^{2}(\mathbb{U})$ norm, namely 

\begin{equation}
e_{n,k}(r e^{i\phi})=C_{n,k}J_{|n|}(j_{|n|,k} r)e^{in\phi},
\end{equation}

\noindent where $J_n$ is a Bessel function of the first kind of order $n$, $j_{n,k}$ is the $k$th positive root of $J_n$, and

\begin{equation}
C_{n,k}=\frac{1}{\sqrt{\pi}}\frac{1}{J_{|n|+1}(j_{|n|,k})}.
\end{equation}

\end{definition}

\begin{remark}
The functions $e_{n,k}$ are orthonormal:

\begin{equation}
\int_{|z|<1}e_{n,k}(z)\overline{e_{m,l}(z)}d^2 z=\delta_{n,m}\delta_{k,l},
\end{equation}

\noindent the eigenvalue related to $e_{n,k}$ is $-j_{|n|,k}^2$, and one has the inequality (see e.g. \cite{besineq1} for the $n=0$ case and \cite{besineq2} for other values of $n$)

\begin{equation}\label{zeroineq}
j_{n,k}^2>n^2+\left(k-\frac{1}{4}\right)^2\pi^2.
\end{equation}

Moreover, as for integer $n$, $J_{-n}(x)=(-1)^n J_n(x)$, we have $j_{-n,k}=j_{n,k}$ so we can drop the absolute values from $j_{|n|,k}$.

\end{remark}

We will also make use of some rough bounds on $e_{n,k}$. While direct proofs surely exist as well, we refer to general results on eigenfunctions of the Laplacian on manifolds with boundary.

\begin{theorem}[\cite{eigest,eiggradest}]\label{th:eigest}
There exists a constant $C>0$ such that 

\begin{equation}
||e_{n,k}||_{L^{\infty}(\mathbb{U})}\leq C j_{n,k}
\end{equation}

\noindent and 

\begin{equation}
||\nabla e_{n,k}||_{L^{\infty}(\mathbb{U})}\leq C j_{n,k}^{3},
\end{equation}

\noindent where by $||\nabla e_{n,k}||_{L^{\infty}(\mathbb{U})}$ we mean $\sup_{z\in \mathbb{U}}|\nabla e_{n,k}(z)|$.

\end{theorem}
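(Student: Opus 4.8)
The plan is to deduce both inequalities from the sharp sup-norm and gradient bounds for Dirichlet eigenfunctions on a smooth bounded planar domain, which is what the cited works provide. Concretely, if $-\Delta\phi = \lambda^2\phi$ on a domain with real-analytic boundary, with $\phi$ vanishing on the boundary and normalised by $\|\phi\|_{L^2} = 1$, then $\|\phi\|_{L^\infty} \le C(1+\lambda)^{1/2}$ and $\|\nabla\phi\|_{L^\infty} \le C(1+\lambda)^{3/2}$; these are the $d=2$ instances of the manifold-with-boundary estimates in \cite{eigest} and \cite{eiggradest}. Applying this to $\phi = e_{n,k}$, whose eigenvalue is $-j_{n,k}^2$ and whose $L^2(\mathbb{U})$-norm is $1$, I would then use $j_{n,k} \ge j_{0,1} > 1$ to absorb $(1+j_{n,k})^{1/2} \le C j_{n,k}$ and $(1+j_{n,k})^{3/2} \le C j_{n,k}^3$, which gives both claimed bounds with considerable room to spare.

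As a self-contained alternative special to the disk, I would argue directly from $e_{n,k}(re^{i\phi}) = C_{n,k}J_{|n|}(j_{n,k}r)e^{in\phi}$ using only elementary Bessel identities: $|J_\nu(x)| \le 1$ for $\nu \ge 0$, the recurrence $\tfrac{2\nu}{x}J_\nu = J_{\nu-1}+J_{\nu+1}$, the derivative formula $J_\nu' = \tfrac12(J_{\nu-1}-J_{\nu+1})$, and $J_\nu'(j_{\nu,k}) = -J_{\nu+1}(j_{\nu,k})$. From $|J_\nu|\le1$ one gets $\|e_{n,k}\|_{L^\infty(\mathbb{U})} \le C_{n,k}$ immediately. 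Writing $\nabla e_{n,k}$ in polar coordinates, the radial component is $C_{n,k}j_{n,k}J_{|n|}'(j_{n,k}r)e^{in\phi}$, of modulus $\le C_{n,k}j_{n,k}$, and the angular component $\tfrac{in}{r}C_{n,k}J_{|n|}(j_{n,k}r)e^{in\phi}$ vanishes for $n=0$ while for $|n|\ge1$ it has modulus $C_{n,k}j_{n,k}\cdot\tfrac{|n|}{j_{n,k}r}|J_{|n|}(j_{n,k}r)| \le C_{n,k}j_{n,k}$, using $\tfrac{\nu}{x}|J_\nu(x)| = \tfrac12|J_{\nu-1}(x)+J_{\nu+1}(x)| \le 1$. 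Hence $\|\nabla e_{n,k}\|_{L^\infty(\mathbb{U})} \le \sqrt2\,C_{n,k}j_{n,k}$, and the whole argument reduces to the single estimate $C_{n,k} \le C j_{n,k}$, i.e. to the uniform lower bound $j_{n,k}^2 J_{|n|+1}(j_{n,k})^2 \ge c > 0$.

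For that last point I would invoke the classical Sturm-type monotonicity of $k \mapsto j_{\nu,k}J_{\nu+1}(j_{\nu,k})^2$ to reduce to the case $k=1$, and then the uniform (Airy-regime) Bessel asymptotics, which give $j_{\nu,1}^2 J_{\nu+1}(j_{\nu,1})^2 \asymp \nu^{2/3}$ as $\nu\to\infty$, together with continuity in $\nu$ over compact ranges, to conclude that the infimum over all $(n,k)$ is strictly positive. I expect this control of the normalising constant $C_{n,k}$ — equivalently the uniform lower bound on $|J_{|n|+1}(j_{n,k})|$ — to be the only step beyond bookkeeping in the direct route; in the first route, the analogous subtlety is verifying that the general estimates really hold with the stated exponents uniformly up to $\partial\mathbb{U}$ despite the whispering-gallery concentration of $e_{n,1}$ near the boundary for large $|n|$ (they do: $\|e_{n,1}\|_{L^\infty(\mathbb{U})} \asymp |n|^{1/3}$, well below the $j_{n,k}^{1/2}$ afforded by the bound). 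Everything else — the polar-coordinate gradient computation and the elementary Bessel inequalities — is routine.
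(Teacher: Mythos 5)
Your first route is exactly what the paper does: Theorem~\ref{th:eigest} is not proved in the paper at all, but simply quoted from \cite{eigest} (sup-norm bound, which in two dimensions gives $\|e_{n,k}\|_{L^\infty(\mathbb{U})}\leq Cj_{n,k}^{1/2}$) and \cite{eiggradest} (gradient bound, giving $Cj_{n,k}^{3/2}$), both of which are stronger than the stated $Cj_{n,k}$ and $Cj_{n,k}^{3}$; your absorption step using $j_{n,k}\geq j_{0,1}>1$ is fine, and your remark that the whispering-gallery modes have $\|e_{n,1}\|_{L^\infty}\asymp |n|^{1/3}$ is a correct consistency check. Your second, self-contained Bessel route is the kind of ``direct proof'' the paper explicitly waves at without carrying out, and it is sound: the polar-coordinate gradient computation and the inequalities $|J_\nu|\le 1$, $|J_\nu'|\le 1$, $\tfrac{\nu}{x}|J_\nu(x)|\le 1$ are correct, and everything indeed reduces to the uniform lower bound $j_{n,k}^2 J_{|n|+1}(j_{n,k})^2\geq c>0$. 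For that last point you can bypass the monotonicity-plus-Airy argument: the Wronskian identity gives $J_{|n|+1}(j_{n,k})\,Y_{|n|}(j_{n,k})=\tfrac{2}{\pi j_{n,k}}$, so
\begin{equation}
j_{n,k}^{2}J_{|n|+1}(j_{n,k})^{2}=\frac{4}{\pi^{2}\left(J_{|n|}(j_{n,k})^{2}+Y_{|n|}(j_{n,k})^{2}\right)}\geq \frac{2}{\pi}\sqrt{j_{n,k}^{2}-n^{2}},
\end{equation}
using Watson's classical bound $J_\nu(x)^{2}+Y_\nu(x)^{2}\leq \tfrac{2}{\pi\sqrt{x^{2}-\nu^{2}}}$ for $x>\nu\geq\tfrac12$ (the case $n=0$ being handled by $x(J_0^2+Y_0^2)\to 2/\pi$ monotonically), and then \eqref{zeroineq} gives $j_{n,k}^{2}-n^{2}>(k-\tfrac14)^{2}\pi^{2}$, so the infimum is strictly positive with no asymptotic analysis needed. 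In short: route one reproduces the paper's citation-based argument, while route two yields an elementary disk-specific proof (with the even better bounds $Cj_{n,k}$ and $Cj_{n,k}^{2}$) at the cost of one classical Bessel inequality.
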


\vspace{0.3cm}

We now turn to the Sobolev spaces which will be the space of distributions where our fields will live.

\begin{definition}
For $s\in \R$, let 

\begin{equation}
\mathcal{H}^{s}=\left\lbrace f=\sum_{n\in \Z,k\in \Z_+}a_{n,k}e_{n,k}: \sum_{n\in \Z,k\in \Z_+}|a_{n,k}|^{2}j_{n,k}^{2s}<\infty\right\rbrace
\end{equation}

\noindent equipped with the inner product

\begin{equation}
\langle f,g\rangle=\sum_{n\in \Z,k\in\Z_+}a_{n,k}\overline{b_{n,k}}j_{n,k}^{2s},
\end{equation}

\noindent where $f=\sum_{n\in \Z,k\in \Z_+}a_{n,k}e_{n,k}$ and $g=\sum_{n\in \Z,k\in \Z_+}b_{n,k}e_{n,k}$.

\end{definition}

\begin{remark} For $s>0$, the notation $\mathcal{H}^{s}_0(\mathbb{U})$ might be more appropriate as this space can be viewed as a closure of the space of compactly supported smooth functions on $\mathbb{U}$ under a suitable norm, but for notational simplicity, we shall not carry the subscript or the reference to the domain $\mathbb{U}$. Also we shall not discuss other basis-independent definitions.

\end{remark}

\begin{remark} For any $s\in \R$ $\mathcal{H}^{s}$ is a separable Hilbert space with this inner product. We denote the corresponding norm by $||\cdot ||_s$. Moreover, for $s\geq 0$, the elements of $\mathcal{H}^{s}$ can be realized as a subspace of $L^{2}(\mathbb{U})$. $\mathcal{H}^{-s}$ on the other hand is the topological dual of $\mathcal{H}^{s}$ and it can be understood as a space of generalized functions acting on functions in $\mathcal{H}^{s}$: for $\phi=\sum_{n\in \Z,k\in \Z_+}\alpha_{n,k}e_{n,k}\in\mathcal{H}^{-s}$ and $f=\sum_{n\in \Z,k\in \Z_+}a_{n,k}e_{n,k}\in \mathcal{H}^{s}$, the action of $\phi$ on $f$ (which we want to formally understand as $\int_{\mathbb{U}}\phi(z)f(z)d^{2}z$) is 

\begin{equation}
\phi(f)=\sum_{n\in \Z,k\in \Z_+}\alpha_{n,k}a_{-n,k}.
\end{equation}
\end{remark}

We now turn to representing polynomials as elements in $\mathcal{H}^{-s}$ (one could of course understand them as elements in a smaller space, but this is the relevant representation for us).

\begin{lemma}\label{le:powerexp}
For $n\geq 0$,

\begin{equation}
z^{n}=\sum_{k=1}^{\infty} \frac{2\sqrt{\pi}}{j_{n,k}}e_{n,k}(z)
\end{equation}

\noindent as an element of $\mathcal{H}^{-s}$ for any $s>0$.

\end{lemma}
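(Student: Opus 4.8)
The plan is to compute the coefficients of $z^n$ in the orthonormal basis $(e_{m,l})$ by directly evaluating the relevant $L^2(\mathbb{U})$ integrals, and then to check that the resulting series converges in $\mathcal{H}^{-s}$ for every $s>0$. First I would observe that, since $z^n = r^n e^{in\phi}$ and $e_{m,l}(re^{i\phi}) = C_{m,l} J_{|m|}(j_{|m|,l}r)e^{im\phi}$, the angular integration $\int_0^{2\pi} e^{in\phi}\overline{e^{im\phi}}\,\mathrm{d}\phi$ forces $m=n$, so only the coefficients $a_{n,k} = \langle z^n, e_{n,k}\rangle_{L^2(\mathbb{U})}$ can be nonzero. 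These are given by
\begin{equation}
a_{n,k} = C_{n,k}\, 2\pi \int_0^1 r^n J_n(j_{n,k} r)\, r\,\mathrm{d}r = 2\pi C_{n,k}\int_0^1 r^{n+1} J_n(j_{n,k}r)\,\mathrm{d}r.
\end{equation}

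Next I would evaluate this radial integral using the standard Bessel identity $\int_0^1 r^{n+1} J_n(\alpha r)\,\mathrm{d}r = J_{n+1}(\alpha)/\alpha$ (which follows from $\frac{\mathrm{d}}{\mathrm{d}x}(x^{n+1}J_{n+1}(x)) = x^{n+1}J_n(x)$), applied with $\alpha = j_{n,k}$. This gives $\int_0^1 r^{n+1}J_n(j_{n,k}r)\,\mathrm{d}r = J_{n+1}(j_{n,k})/j_{n,k}$, and substituting the value of $C_{n,k} = \frac{1}{\sqrt{\pi}}\frac{1}{J_{|n|+1}(j_{|n|,k})}$ from the definition, the factor $J_{n+1}(j_{n,k})$ cancels and we obtain exactly $a_{n,k} = \frac{2\pi}{\sqrt{\pi}}\frac{1}{j_{n,k}} = \frac{2\sqrt{\pi}}{j_{n,k}}$, as claimed.

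Finally I would justify convergence of $\sum_k \frac{2\sqrt{\pi}}{j_{n,k}} e_{n,k}$ in $\mathcal{H}^{-s}$: by definition of the $\|\cdot\|_{-s}$ norm this amounts to checking $\sum_k |a_{n,k}|^2 j_{n,k}^{-2s} = 4\pi \sum_k j_{n,k}^{-2-2s} < \infty$, which is immediate from the lower bound $j_{n,k}^2 > n^2 + (k-\tfrac14)^2\pi^2$ in \eqref{zeroineq} since this forces $j_{n,k} \gtrsim k$ and hence the sum is dominated by $\sum_k k^{-2-2s}$, convergent for all $s>0$ (indeed for $s>-1/2$). One should also note that for $s\geq 0$, $z^n$ is genuinely in $L^2(\mathbb{U})$ so the expansion holds there and a fortiori in the larger space $\mathcal{H}^{-s}$; the point of stating it for $\mathcal{H}^{-s}$ is that this is the ambient space used later. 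The main (and only real) obstacle is correctly invoking the Bessel integral identity and matching the normalization constants so that everything collapses to $2\sqrt{\pi}/j_{n,k}$; the convergence part is routine given \eqref{zeroineq}.
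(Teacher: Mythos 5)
Your proposal is correct and follows essentially the same route as the paper: compute the Fourier--Bessel coefficients of $r^n$ via the identity $\frac{d}{dx}(x^{n+1}J_{n+1}(x))=x^{n+1}J_n(x)$, observe that the normalization constant $C_{n,k}$ cancels $J_{n+1}(j_{n,k})$ to give $2\sqrt{\pi}/j_{n,k}$, and pass from the $L^2(\mathbb{U})$ expansion to the identification in $\mathcal{H}^{-s}$ using \eqref{zeroineq}. The only cosmetic difference is that the paper phrases the last step as identifying the action of $z\mapsto z^n$ on test functions in $\mathcal{H}^{s}$ by dominated convergence, while you embed $L^2=\mathcal{H}^{0}$ into $\mathcal{H}^{-s}$; both are fine.
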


\begin{proof}
This is simply an issue of calculating the Fourier-Bessel series of the function $r\mapsto r^{n}$. Making use of the identity $\frac{d}{dx}(x^{n+1}J_{n+1}(x))=x^{n+1}J_n(x)$, we have for $n\geq 0$

\begin{align}
\notag\int_0^{1}r^{n+1}J_n(j_{n,k}r)dr &=\frac{1}{j_{n,k}^{2+n}}\int_0^{j_{n,k}}r^{n+1}J_n(r)dr\\
&=\frac{1}{j_{n,k}^{2+n}}\int_0^{j_{n,k}}\frac{d}{dr}(r^{n+1}J_{n+1}(r))dr\\
&=\frac{1}{j_{n,k}}J_{n+1}(j_{n,k})\notag 
\end{align}

\noindent implying that as elements in $L^{2}((0,1),rdr)$

\begin{equation}
r^{n}=\sum_{k=1}^{\infty}\frac{2}{j_{n,k} J_{n+1}(j_{n,k})}J_{n}(j_{n,k}r),
\end{equation}

\noindent which in turn implies that as elements in $L^{2}(\mathbb{U},d^{2}z)$

\begin{equation}
z^{n}=\sum_{k=1}^{\infty}\frac{2\sqrt{\pi}}{j_{n,k}}e_{n,k}(z).
\end{equation}

This then implies (say by dominated convergence arguments) that for any $f=\sum_{n\in \Z,k\in \Z_+}a_{n,k}e_{n,k}\in\mathcal{H}^{s}$ for some $s>0$,

\begin{equation}
\int_{|z|<1}z^{n}f(z)d^{2}z=\sum_{k=1}^{\infty}\frac{2\sqrt{\pi}}{j_{n,k}}a_{-n,k}
\end{equation}

\noindent which allows us to identify the mapping $z\mapsto z^{n}$ with $\sum_{k=1}^{\infty}\frac{2\sqrt{\pi}}{j_{n,k}}e_{n,k}$ when considered as elements of $\mathcal{H}^{-s}$ for any $s>0$.

\end{proof}

\begin{remark} In particular, we see that constants are elements of $\mathcal{H}^{-s}$ for $s>0$ and we can distinguish between elements differing by constants.
\end{remark}

\section{Representing $z\mapsto \log |z-w|$ as an element of $\mathcal{H}^{-s}$}

We wish to express the logarithm of the absolute value of the characteristic polynomial as an element of $\mathcal{H}^{-s}$ so let us begin by considering $z\mapsto \log |z-w|$ as an element of $\mathcal{H}^{-s}$ for any $w\in \C$. Before doing this, let us recall how $\log|z-w|$ is related to the Green's function of the Laplacian on $\mathbb{U}$ with zero Dirichlet boundary conditions. We also point out a simple expansion for $z\in \mathbb{U}$, $w\notin\mathbb{U}$.

\begin{lemma}\label{le:logrep}
For $z,w\in\mathbb{U}$, 

\begin{equation}
\log |z-w|=-2\pi \sum_{n\in \Z,k\in \Z_+}\frac{1}{j_{n,k}^{2}}e_{n,k}(z)e_{-n,k}(w)-\mathrm{Re}\sum_{n=1}^{\infty}\frac{1}{n}z^{n}\overline{w}^{n}.
\end{equation}

For $z\in \mathbb{U}$, $w\notin\mathbb{U}$

\begin{equation}
\log |z-w|=\log |w|-\mathrm{Re}\sum_{n=1}^{\infty}\frac{1}{n}\left(\frac{z}{w}\right)^{n}.
\end{equation}

\end{lemma}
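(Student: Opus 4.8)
The plan is to derive both identities from the well-known expansion of the Dirichlet Green's function $G_{\mathbb{U}}$ on the unit disk in terms of the eigenfunctions $e_{n,k}$, together with the explicit formula for $G_{\mathbb{U}}$ in terms of the logarithm. Recall that the Green's function solving $-\Delta_z G_{\mathbb{U}}(z,w) = 2\pi\,\delta_w(z)$ with zero boundary values on $\partial\mathbb{U}$ has the two standard representations
\begin{equation}
G_{\mathbb{U}}(z,w) = -\log|z-w| + \log\left|1-z\overline{w}\right| - \log|w|\cdot\ind_{\{w\neq 0\}}\cdot 0 = -\log\left|\frac{z-w}{1-z\overline{w}}\right|,
\end{equation}
and, expanding in the orthonormal eigenbasis with eigenvalues $-j_{n,k}^2$,
\begin{equation}
G_{\mathbb{U}}(z,w) = 2\pi\sum_{n\in\Z,k\in\Z_+}\frac{1}{j_{n,k}^2}e_{n,k}(z)\overline{e_{n,k}(w)}.
\end{equation}
Since $\overline{e_{n,k}(w)} = \overline{C_{n,k}J_{|n|}(j_{|n|,k}\rho)e^{in\psi}} = C_{n,k}J_{|n|}(j_{|n|,k}\rho)e^{-in\psi} = e_{-n,k}(w)$ (the constants $C_{n,k}$ are real and depend only on $|n|$), the eigenfunction sum is exactly the first sum appearing in the lemma. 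Equating the two expressions for $G_{\mathbb{U}}$ gives
\begin{equation}
\log|z-w| = -2\pi\sum_{n\in\Z,k\in\Z_+}\frac{1}{j_{n,k}^2}e_{n,k}(z)e_{-n,k}(w) + \log|1-z\overline{w}|,
\end{equation}
so the first claim follows once I expand $\log|1-z\overline{w}| = \Re\log(1-z\overline{w}) = -\Re\sum_{n\geq1}\frac{1}{n}(z\overline{w})^n = -\Re\sum_{n\geq 1}\frac{1}{n}z^n\overline{w}^n$, valid for $|z\overline{w}|<1$, i.e. for $z,w\in\mathbb{U}$.

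For the second identity, for $z\in\mathbb{U}$ and $w\notin\mathbb{U}$ I write $\log|z-w| = \log|w| + \log\left|1 - \frac{z}{w}\right| = \log|w| + \Re\log\left(1-\tfrac{z}{w}\right)$, and since $|z/w|<1$ the principal-branch logarithm expands as $-\sum_{n\geq1}\frac{1}{n}(z/w)^n$, giving the stated formula directly. This case requires essentially no machinery beyond the Taylor series of $\log(1-\zeta)$.

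The one point that needs care — and the main obstacle — is the precise sense in which these identities hold and converge: the eigenfunction series for $G_{\mathbb{U}}(\cdot,w)$ converges in $\mathcal{H}^{-s}$ for $s>0$ (indeed in $L^2(\mathbb{U})$ in the $z$ variable, since $\sum_k j_{n,k}^{-4}<\infty$ and the total mass over $n$ is controlled using $||e_{n,k}||_{L^\infty}\lesssim j_{n,k}$ from Theorem \ref{th:eigest} together with \eqref{zeroineq}), whereas $\log|z-w|$ has only a logarithmic (hence $L^2_{loc}$, indeed $L^p_{loc}$ for all $p$) singularity at $z=w$, so it is a genuine $L^2(\mathbb{U})$ function and the identification is an honest $L^2$ (a fortiori $\mathcal{H}^{-s}$) identity. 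I would justify the interchange of the two representations either by noting both sides solve the same distributional PDE with the same boundary data and invoking uniqueness, or more concretely by testing against an arbitrary $e_{m,l}$ (equivalently against a dense class of test functions in $\mathcal{H}^s$) and checking the Fourier–Bessel coefficients match, using Lemma \ref{le:powerexp} to handle the polynomial term $\Re\sum_n \frac1n z^n\overline{w}^n$. The uniform-in-$n$ bounds and the inequality \eqref{zeroineq} are exactly what make the double sum absolutely convergent in $\mathcal{H}^{-s}$ for every $s>0$, so the representation is well defined in the space where we ultimately want the characteristic polynomial to live.
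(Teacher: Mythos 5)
Your proposal is correct and takes essentially the same route as the paper: both identify the eigenfunction series as the Dirichlet Green's function of the disk and account for the harmonic correction $\log|1-z\overline{w}|$ (the paper derives this correction by observing that $\tfrac{1}{2\pi}\log|z-w|-G_D(z,w)$ is harmonic with boundary values $\tfrac{1}{2\pi}\log|1-\overline{z}w|$, which is just a re-derivation of the closed form you quote), and the exterior case is the same Taylor expansion of $\log|1-z/w|$. One small caveat: your parenthetical claim that $\|e_{n,k}\|_{L^\infty}\lesssim j_{n,k}$ yields $L^2$-convergence in $z$ of the eigenfunction series does not work as stated (it only bounds the norm by $\sum_{n,k} j_{n,k}^{-2}$, which diverges), but this is immaterial since your alternative justifications --- uniqueness for the boundary value problem, or matching Fourier--Bessel coefficients as in the paper's Lemma \ref{le:coord} --- suffice.
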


\begin{proof}
The first part of the first sum is simply the Green's function of the Laplacian on $\mathbb{U}$ with zero Dirichlet boundary conditions, i.e. for 

\begin{equation}
G_D(z,w)=-\sum_{n\in \Z,k\in \Z_+}\frac{1}{j_{n,k}^{2}}e_{n,k}(z)e_{-n,k}(w)
\end{equation}

\noindent one has for $|w|<1$, $\Delta_z G_D(z,w)=\delta(z,w)$ and $G_D(z,w)\to 0$ as $z\to \partial\mathbb{U}$ from $\mathbb{U}$. On the other hand, as $\Delta_z \frac{1}{2\pi}\log |z-w|=\delta(z-w)$, we see that 

\begin{equation}
\frac{1}{2\pi}\log |z-w|-G_D(z,w)
\end{equation}

\noindent is a harmonic function of $z$ in $\mathbb{U}$ and for $|z|=1$ it equals $\frac{1}{2\pi}\log |z-w|=\frac{1}{2\pi}\log |1-\overline{z}w|$. Thus extending this harmonically gives

\begin{equation}
\log |z-w|-2\pi G_D(z,w)=\log |1-\overline{z}w|=-\mathrm{Re}\sum_{n=1}^{\infty}\frac{1}{n}z^{n}\overline{w}^{n}.
\end{equation}

This is the first claim. For $|w|\geq 1$ and $|z|<1$, one has 

\begin{equation}
\log |z-w|=\log|w|+\log \left|1-\frac{z}{w}\right|=\log |w|-\mathrm{Re}\sum_{n=1}^{\infty}\frac{1}{n}\left(\frac{z}{w}\right)^{n}.
\end{equation}

\end{proof}

Let us now show that using this expansion, we can represent $z\mapsto \log |z-w|$ as an element of $\mathcal{H}^{-s}$.

\begin{lemma}\label{le:coord}
For $s>0$, the mapping $z\mapsto \log |z-w|$ is an element of $\mathcal{H}^{-s}$ and equals $\sum_{n\in \Z,k\in \Z_+}\alpha_{n,k}(w)e_{n,k}$, where for $|w|<1$,

\begin{equation}
\alpha_{n,k}(w)=-\frac{2\pi}{j_{n,k}^{2}}e_{-n,k}(w)-\frac{\sqrt{\pi}(\mathbf{1}(n>0)\overline{w}^{n}+\mathbf{1}(n<0)w^{-n})}{j_{n,k}|n|}
\end{equation}

\noindent and for $|w|\geq 1$

\begin{equation}
\alpha_{n,k}(w)=\frac{2\sqrt{\pi}}{j_{n,k}}\left(\delta_{n,0}\log |w|-\mathbf{1}(n>0)\frac{1}{2nw^{n}}-\mathbf{1}(n<0)\frac{1}{2|n|\overline{w}^{|n|}}\right).
\end{equation}

\end{lemma}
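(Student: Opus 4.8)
The strategy is to take the two series representations from Lemma \ref{le:logrep} and read off the coefficients $\alpha_{n,k}(w)$ directly, term by term, using the power-series expansion of Lemma \ref{le:powerexp}. The only genuine work is to check that the resulting coefficient sequence is indeed in $\ell^2$ with the weight $j_{n,k}^{-2s}$ for every $s>0$, so that the formal expansion actually defines an element of $\mathcal{H}^{-s}$, and to verify that the pairing of this element against an arbitrary $f \in \mathcal{H}^{s}$ reproduces $\int_{\mathbb{U}} \log|z-w|\, f(z)\, d^2 z$.

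First I would treat the case $|w|<1$. By Lemma \ref{le:logrep}, $\log|z-w| = 2\pi G_D(z,w) - \Re \sum_{n\geq 1} \frac{1}{n} z^n \overline w^n$. For the Green's function part, the expansion $G_D(z,w) = -\sum_{n,k} j_{n,k}^{-2} e_{n,k}(z) e_{-n,k}(w)$ contributes $-2\pi j_{n,k}^{-2} e_{-n,k}(w)$ to $\alpha_{n,k}(w)$. For the harmonic correction, I write $\Re(z^n \overline w^n) = \tfrac12 z^n \overline w^n + \tfrac12 \overline z^n w^n$, and substitute the expansion $z^n = \sum_k \frac{2\sqrt\pi}{j_{n,k}} e_{n,k}(z)$ from Lemma \ref{le:powerexp}, together with its conjugate $\overline z^n = \overline{z^n} = \sum_k \frac{2\sqrt\pi}{j_{n,k}} \overline{e_{n,k}(z)} = \sum_k \frac{2\sqrt\pi}{j_{n,k}} e_{-n,k}(z)$ (using $C_{n,k}=C_{-n,k}$ and $e^{-in\phi}$). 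Collecting the coefficient of $e_{n,k}(z)$ for $n>0$ gives $-\tfrac12 \cdot \frac{1}{n} \cdot \frac{2\sqrt\pi}{j_{n,k}} \overline w^n$, and for $n<0$ (coming from the $\overline z^{|n|} w^{|n|}$ term with $|n|=-n$) gives $-\tfrac12 \cdot \frac{1}{|n|} \cdot \frac{2\sqrt\pi}{j_{n,k}} w^{|n|}$; for $n=0$ there is no correction. This matches the stated formula. The case $|w|\geq 1$ is easier: from the second identity in Lemma \ref{le:logrep} one has $\log|z-w| = \log|w| - \Re\sum_{n\geq 1}\frac1n (z/w)^n$, and the same substitution of Lemma \ref{le:powerexp} into each $z^n$ and its conjugate yields the claimed $\alpha_{n,k}(w)$, with the $\log|w|$ term contributing only to $n=0$ via the expansion of the constant function $1 = z^0 = \sum_k \frac{2\sqrt\pi}{j_{0,k}} e_{0,k}(z)$.

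The main obstacle — and the point that needs care rather than cleverness — is justifying that these formal manipulations are legitimate in $\mathcal{H}^{-s}$, i.e. that $\sum_{n,k} |\alpha_{n,k}(w)|^2 j_{n,k}^{-2s} < \infty$ for all $s>0$. Here I would use the lower bound \eqref{zeroineq}, $j_{n,k}^2 > n^2 + (k-\tfrac14)^2\pi^2$, to control the sum. For the Green's-function term one has $|{-2\pi j_{n,k}^{-2} e_{-n,k}(w)}| \lesssim j_{n,k}^{-1}$ by Theorem \ref{th:eigest}, so the contribution is $\lesssim \sum_{n,k} j_{n,k}^{-2-2s}$, which converges for every $s>0$ since the exponent exceeds $2$ and \eqref{zeroineq} makes $(n,k)\mapsto j_{n,k}$ grow like $|n|+k$. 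For the polynomial-correction terms, $|w|<1$ (resp.\ $|w|^{-1}\leq 1$) makes $|\overline w^{|n|}|$ (resp.\ $|w|^{-|n|}$) decay geometrically in $|n|$, and the prefactor $\frac{\sqrt\pi}{j_{n,k}|n|}$ only helps, so these sums converge as well; at $|w|=1$ the $|w|^{-|n|}=1$ factor is harmless because of the $1/(|n| j_{n,k})$ decay and \eqref{zeroineq}. Once summability is in hand, exchanging the (absolutely convergent) series with the pairing against $f=\sum a_{n,k}e_{n,k}\in\mathcal{H}^s$ is routine — precisely the dominated-convergence argument already invoked in the proof of Lemma \ref{le:powerexp} — and shows $\sum_{n,k}\alpha_{n,k}(w) a_{-n,k} = \int_{\mathbb{U}}\log|z-w|\,f(z)\,d^2z$, completing the identification.
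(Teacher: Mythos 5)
Your proposal is correct and follows essentially the same route as the paper: decompose $\log|z-w|$ via Lemma \ref{le:logrep}, expand the Green's function and the power/constant terms through Lemma \ref{le:powerexp}, and collect the coefficient of each $e_{n,k}$, which yields exactly the stated $\alpha_{n,k}(w)$ in both cases. The only difference is in justifying membership in $\mathcal{H}^{-s}$: the paper simply observes that $z\mapsto\log|z-w|$ lies in $L^{2}(\mathbb{U},d^{2}z)$ (hence in $\mathcal{H}^{-s}$ for $s>0$) and reads the $\alpha_{n,k}(w)$ off as Fourier--Bessel coefficients, whereas your explicit summability check of $\sum_{n,k}|\alpha_{n,k}(w)|^{2}j_{n,k}^{-2s}$ via Theorem \ref{th:eigest} and \eqref{zeroineq} is valid but more work than necessary.
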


\begin{proof}
As in both cases $z\mapsto \log |z-w|\in L^{2}(\mathbb{U},d^{2}z)$. This again amounts to calculating the Fourier-Bessel coefficients of the functions. For $|w|<1$, we have by the previous lemma

\begin{align}
&\int_{|z|<1}\log |z-w|e_{-n,k}(z)d^{2}z\notag\\
&=2\pi \int_{|z|<1} G_D(z,w)e_{-n,k}(z)d^{2}z-\int_{|z|<1}\sum_{m=1}^{\infty}\frac{1}{m}\mathrm{Re}(z^{m}\overline{w}^{m})e_{-n,k}(z)d^{2}z\notag\\
&=\notag -\frac{2\pi}{j_{n,k}^{2}}e_{-n,k}(w)-\frac{\sqrt{\pi}(\mathbf{1}(n>0)\overline{w}^{n}+\mathbf{1}(n<0)w^{-n})}{j_{n,k}|n|}.
\end{align}

Again the previous lemma implies that for $|w|\geq 1$, 

\begin{align}
\int_{|z|<1}&\log |z-w|e_{-n,k}(z)d^{2}z\notag\\
&=\log |w|\int_{|z|<1}e_{-n,k}(z)d^{2}z-\int_{|z|<1}\sum_{m=1}^{\infty}\frac{1}{m}\mathrm{Re}\left(\frac{z^{m}}{w^{m}}\right)e_{-n,k}(z)d^{2}z\\
&=\notag \frac{2\sqrt{\pi}}{j_{n,k}}\left(\delta_{n,0}\log |w|-\mathbf{1}(n>0)\frac{1}{2nw^{n}}-\mathbf{1}(n<0)\frac{1}{2|n|\overline{w}^{|n|}}\right).
\end{align}

\end{proof}

\section{The Ginibre ensemble and the results of \cite{rv}}
In this section we recall the Ginibre ensemble, the joint distribution of its eigenvalues and some results from \cite{rv} concerning the linear statistics of the Ginibre ensemble.

\begin{definition}[Ginibre ensemble]
We call a random ($\C^{N\times N}$-valued) variable $G_N$ a Ginibre random matrix or an element of the Ginibre ensemble if its entries are of the form $(G_N)_{i,j}=\frac{1}{\sqrt{N}}Z_{i,j}$, where $Z_{i,j}$ are i.i.d. standard complex Gaussians. We will denote the eigenvalues of $G_N$ by $(z_1,...,z_N)$.
\end{definition}

\begin{remark} We will think of $G_N$ for different $N$ as living on the same probability space which is the one generated by i.i.d. standard complex Gaussians $(Z_{i,j})_{i,j=1}^{\infty}$.
\end{remark}

\vspace{0.3cm}

The law of the eigenvalues of $G_N$ was discovered by Ginibre \cite{gin}:

\begin{proposition}[Ginibre]
The distribution of the eigenvalues of $G_N$ is given by 

\begin{equation}
\mathbb{P}(dz_1,...,dz_N)=\frac{1}{Z_N}\prod_{i<j}|z_i-z_j|^{2}\prod_{k=1}^{N}\frac{N}{\pi}e^{-N|z_k|^{2}}d^{2}z_k,
\end{equation}

\noindent where $d^{2}z_k$ is the Lebesgue measure on $\C$ and 

\begin{equation}
Z_N=\frac{\prod_{k=1}^{N}k!}{N^{\frac{N(N-1)}{2}}}.
\end{equation}

\end{proposition}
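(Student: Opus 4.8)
The plan is to follow Ginibre's original derivation via the Schur decomposition, which reduces the statement to a single Jacobian computation together with a pair of elementary Gaussian integrals. Since the entries $(G_N)_{ij}=N^{-1/2}Z_{ij}$ are independent complex Gaussians with $\mathbb{E}|(G_N)_{ij}|^{2}=1/N$, the law of $G_N$ on $\C^{N\times N}$ has density $(N/\pi)^{N^{2}}e^{-N\,\mathrm{Tr}(GG^{*})}$ with respect to $dG=\prod_{i,j}d^{2}G_{ij}$. Almost surely $G_N$ has $N$ distinct eigenvalues, so one may write $G=U(Z+T)U^{*}$ with $U$ unitary, $Z=\mathrm{diag}(z_1,\dots,z_N)$, and $T$ strictly upper triangular; this Schur form is unique up to the action of $S_N$ permuting the diagonal entries of $Z$ (and the columns of $U$) and of the diagonal torus $\T^{N}$ acting by $U\mapsto U\Theta$.

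First I would record the elementary identity
\begin{equation}
\mathrm{Tr}(GG^{*})=\mathrm{Tr}\big((Z+T)(Z+T)^{*}\big)=\sum_{k=1}^{N}|z_k|^{2}+\sum_{i<j}|T_{ij}|^{2},
\end{equation}
which holds because $ZT^{*}$ and $TZ^{*}$ are strictly triangular, hence traceless. The main step — and, I expect, the only genuinely delicate point — is the Jacobian of the change of variables $G\leftrightarrow(U\bmod\T^{N},\,z_1,\dots,z_N,\,\{T_{ij}\}_{i<j})$. Since conjugation by a fixed unitary is an isometry of $\C^{N\times N}$, it is enough to analyse $U^{*}dG\,U=dZ+dT+[A,Z+T]$ with $A=U^{*}dU$ anti-Hermitian (its diagonal part accounting for the $\T^{N}$-freedom, so only the off-diagonal part of $A$ is a free coordinate). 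Splitting this matrix of $1$-forms into its diagonal, strictly upper, and strictly lower parts exhibits the differential as block triangular: the $dz_k$ appear only on the diagonal and the $dT_{ij}$ only strictly above it, each with unit coefficient, so the Jacobian reduces to the determinant of the linear map sending the off-diagonal part of $A$ to the strictly lower part of $[A,Z+T]$, whose $(i,j)$ entry is $A_{ij}(z_j-z_i)$ plus terms depending only on $T$. A short bookkeeping argument — or a direct appeal to the classical computation, see e.g.\ Ginibre \cite{gin} — identifies this determinant, up to a unit and complex conjugation, with the Vandermonde $\prod_{i<j}(z_i-z_j)$, so that the real Jacobian equals $\prod_{i<j}|z_i-z_j|^{2}$.

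Granting this, pushing the law of $G_N$ forward through the decomposition and inserting the Gaussian density shows that for any bounded symmetric function $f$ of the eigenvalues, $\mathbb{E}\,f(z_1,\dots,z_N)$ is a constant multiple of $\int_{\C^{N}}f(z)\prod_{i<j}|z_i-z_j|^{2}\prod_k e^{-N|z_k|^{2}}\,d^{2}z_k$: the integral over $U$ contributes a finite constant, the integral of $e^{-N\sum_{i<j}|T_{ij}|^{2}}$ over the strictly upper triangular part contributes the (immaterial) constant $(\pi/N)^{N(N-1)/2}$, and one divides by $N!$ for the $S_N$-ambiguity. This is precisely the asserted form of the density, so it only remains to identify $Z_N$, and rather than tracking the volume of $U(N)/\T^{N}$ I would simply impose total mass one. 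Writing $\prod_{i<j}(z_j-z_i)=\det(z_i^{l-1})_{i,l=1}^{N}$ and expanding both determinants, all off-diagonal terms integrate to zero against $\prod_k e^{-N|z_k|^{2}}d^{2}z_k$ by rotational invariance, while the diagonal terms give, via $\int_{\C}|z|^{2k}e^{-N|z|^{2}}\,d^{2}z=\pi k!/N^{k+1}$,
\begin{equation}
\int_{\C^{N}}\prod_{i<j}|z_i-z_j|^{2}\prod_{k=1}^{N}e^{-N|z_k|^{2}}\,d^{2}z_k=N!\prod_{k=0}^{N-1}\frac{\pi k!}{N^{k+1}}=\pi^{N}\,N!\,\frac{\prod_{k=0}^{N-1}k!}{N^{N(N+1)/2}}.
\end{equation}
Multiplying by $(N/\pi)^{N}$ and simplifying $N!\prod_{k=0}^{N-1}k!=\prod_{k=1}^{N}k!$ and $N(N+1)/2-N=N(N-1)/2$ yields $Z_N=\prod_{k=1}^{N}k!\big/N^{N(N-1)/2}$, as claimed. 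The Jacobian identity is the principal obstacle; everything else is routine computation.
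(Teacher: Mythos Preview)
The paper does not prove this proposition at all; it merely attributes the result to Ginibre and cites \cite{gin}. Your sketch is a correct account of precisely that cited derivation --- the Schur decomposition, the Vandermonde Jacobian, the Gaussian integration over the strictly upper-triangular part, and the normalisation via $\int_{\C}|z|^{2k}e^{-N|z|^{2}}d^{2}z=\pi k!/N^{k+1}$ --- and the arithmetic leading to $Z_N=\prod_{k=1}^{N}k!\big/N^{N(N-1)/2}$ checks out. In short, you have supplied what the paper deliberately omitted, and by the same route it points to.
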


The main result of \cite{rv} was the following.

\begin{theorem}[\cite{rv} Theorem 1.1]\label{th:linstat}
Let $f:\C\to \R$ possess continuous partial derivatives in a neighborhood of $\mathbb{U}=\lbrace z\in \C: |z|<1\rbrace$, and grow at most exponentially at infinity. Then as $N\to\infty$, the distribution of the random variable $\sum_{k=1}^{N}(f(z_k)-\E(f(z_k)))$ converges to a (centered) normal distribution with variance 

\begin{equation}
\frac{1}{4\pi}\int_{\mathbb{U}}|\nabla f|^{2}d^{2}z+\frac{1}{2}\sum_{k\in \Z}|k||\widehat{f}(k)|^{2},
\end{equation}

\noindent where $\widehat{f}(k)=\frac{1}{2\pi}\int_0^{2\pi}f(e^{i\theta})e^{-ik\theta}d\theta$ are the Fourier coefficients of $f$ restricted to the unit circle.
\end{theorem}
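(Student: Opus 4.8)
The route I would take exploits the determinantal structure of the Ginibre ensemble. By Ginibre's formula the eigenvalues $(z_1,\dots,z_N)$ form a determinantal point process on $\C$ with the projection kernel
\begin{equation}
K_N(z,w)=\frac{N}{\pi}\,e^{-\frac{N}{2}(|z|^{2}+|w|^{2})}\sum_{j=0}^{N-1}\frac{(Nz\overline{w})^{j}}{j!},
\end{equation}
which satisfies the reproducing identity $\int_{\C}K_N(z,u)K_N(u,w)\,d^{2}u=K_N(z,w)$ and whose diagonal $K_N(z,z)$ is the one-point intensity. For such a process the Laplace functional of a linear statistic is a Fredholm determinant, $\E\exp\big(\sum_{k}g(z_{k})\big)=\det\big(I+(e^{g}-1)K_N\big)$ on $L^{2}(\C,d^{2}z)$, so with $g=tf$ the logarithm has the cumulant expansion $\sum_{m\geq1}\frac{(-1)^{m+1}}{m}\operatorname{Tr}\big[((e^{tf}-1)K_N)^{m}\big]$, and extracting the coefficient of $t^{p}$ writes the $p$-th cumulant $C_{p}$ of $\sum_{k}f(z_{k})$ as a finite signed sum of integrals $\int f(z_{1})^{p_{1}}\cdots f(z_{m})^{p_{m}}\,K_N(z_{1},z_{2})\cdots K_N(z_{m},z_{1})\,\prod_{i}d^{2}z_{i}$ over compositions $p_{1}+\dots+p_{m}=p$. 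Proving the theorem then reduces to: (i) controlling $C_{1}=\E\sum_{k}f(z_{k})$ and the truncation at infinity, (ii) showing $C_{2}$ converges to the stated variance, and (iii) showing $C_{p}\to0$ for every $p\geq3$; Gaussianity then follows by the method of cumulants.

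For (i): since $K_N(z,z)$ converges to $\frac{N}{\pi}\ind_{\mathbb{U}}(z)$ on compact subsets of $\mathbb{U}$ and decays like $e^{-cN(|z|^{2}-1)}$ for $|z|>1$ (equivalently, by Kostlan's observation the moduli satisfy $\{|z_{j}|^{2}\}\eqlaw\{\gamma_{j}/N\}$ with independent $\gamma_{j}$ of $\Gamma(j,1)$ law, so $\max_{j}|z_{j}|\to1$ with exponential tails), the at-most-exponential growth of $f$ lets one replace $f$ by $f\,\ind_{|z|\le R}$ for large fixed $R$ at the cost of an error that vanishes as $N\to\infty$ in every cumulant. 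One may thus assume $f$ is supported in a neighbourhood of $\overline{\mathbb{U}}$.

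For (ii) and (iii): using the reproducing identity one can, as is standard for determinantal linear statistics, rewrite each $C_{p}$ with $p\ge2$ so that $f$ enters only through differences $f(z_{i})-f(z_{j})$ (consistent with the shift invariance of cumulants of order $\geq2$). On the bulk scale the kernel $|K_N(z,w)|$ concentrates within distance $O(N^{-1/2})$ of the diagonal inside $\mathbb{U}$, where $|K_N(z,z+\xi)|^{2}\approx\frac{N^{2}}{\pi^{2}}e^{-N|\xi|^{2}}$; substituting $z_{i}=z_{1}+u_{i}/\sqrt{N}$ and Taylor-expanding shows the generic bulk term is $O(N^{1-p/2})$, hence $O(1)$ for $p=2$ and $o(1)$ for $p\geq3$. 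For $p=2$ the explicit identity $C_{2}=\frac12\iint(f(z)-f(w))^{2}|K_N(z,w)|^{2}\,d^{2}z\,d^{2}w$ together with $f(z)-f(w)\approx-\nabla f(z)\cdot u/\sqrt{N}$ and the Gaussian $u$-integral produces exactly $\frac1{4\pi}\int_{\mathbb{U}}|\nabla f|^{2}\,d^{2}z$ from the interior of $\mathbb{U}$. The remaining and most delicate contribution is the edge $|z|\approx1$: there the kernel has a one-dimensional error-function profile of width $N^{-1/2}$ in the radial direction, and carrying out the edge integral after expanding $f|_{\partial\mathbb{U}}$ in Fourier modes yields the boundary term $\frac12\sum_{k\in\Z}|k||\widehat{f}(k)|^{2}$, while the analogous edge pieces of $C_{p}$ for $p\geq3$ are again $o(1)$. (Alternatively this step can be organized via Kostlan's representation for the radial modes together with the decoupling of the angular Fourier modes, which holds because the monomials $z^{j}$ are $K_N$-orthogonal.)

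I expect the main obstacle to be precisely this edge analysis for $C_{2}$: extracting the exact constant $\tfrac12\sum_{k}|k||\widehat{f}(k)|^{2}$, and not merely the correct order of magnitude, from the boundary behaviour of $K_N$, and doing so under the weak hypothesis that $f$ is only $C^{1}$ near $\overline{\mathbb{U}}$. By comparison the bulk term and the vanishing of the higher cumulants are relatively routine once the $O(N^{1-p/2})$ bookkeeping is in place.
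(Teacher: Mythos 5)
A preliminary remark: this statement is not proved in the paper at all. It is Theorem 1.1 of Rider and Vir\'ag, quoted verbatim and used as an input (the note's own work begins with Corollary \ref{cor:linstat} and the Sobolev-space analysis). So there is no internal proof to compare yours against; the relevant comparison is with the original argument in \cite{rv}, which is likewise built on the determinantal structure of the Ginibre eigenvalues and on cumulant/kernel asymptotics. In that sense your proposal is the natural route and is in the same spirit as the source; in particular your exact variance identity $C_2=\frac{1}{2}\iint(f(z)-f(w))^{2}|K_N(z,w)|^{2}\,d^{2}z\,d^{2}w$ is precisely the identity this paper itself uses (with the kernel written out) in the proof of Lemma \ref{le:varbound}, and your bulk computation does reproduce the constant $\frac{1}{4\pi}\int_{\mathbb{U}}|\nabla f|^{2}$ correctly.

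As a proof, however, what you have is a plan whose two decisive steps are exactly the ones left open, as you yourself flag. First, the edge contribution to $C_2$: the error-function profile describes the radial decay of the one-point function, but to extract $\frac12\sum_{k}|k||\widehat f(k)|^{2}$ you need the full two-point structure in the boundary layer, e.g.\ by decomposing in the angular modes $z^{j}$ (the eigenfunctions of $K_N$) and carrying out the asymptotics of the resulting one-dimensional sums for $j$ of order $N$, and you must do this assuming only that $f$ is $C^{1}$ near $\overline{\mathbb{U}}$. Second, the vanishing of the cumulants of order $p\ge 3$: your $O(N^{1-p/2})$ scaling argument is a bulk heuristic and does not apply in the edge layer, where naive power counting is not sufficient and one has to use the combinatorial cancellations in the cumulant formula (Soshnikov-type identities); relatedly, the reduction of $C_p$ to expressions involving only differences $f(z_i)-f(z_j)$ is immediate for $p=2$ via the reproducing property but requires the full cumulant combinatorics for $p\ge3$. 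Two smaller points: the off-disk truncation works not because of decay of order $e^{-cN(|z|^{2}-1)}$ (near $|z|=1$ the exponent is quadratic in $|z|^2-1$) but because $K_N(z,z)\lesssim Ne^{-N(|z|^{2}-1-\log|z|^{2})}$, whose super-exponential decay in $|z|$ is what beats the allowed exponential growth of $f$; and the Taylor expansion step in the bulk needs a uniform modulus of continuity for $\nabla f$, which is available since $\nabla f$ is continuous on a neighbourhood of $\overline{\mathbb{U}}$ and the rest of the plane has been truncated away. So: right strategy, correct target constants, but the $H^{1/2}$ edge analysis and the higher-cumulant estimates still constitute the real content of the theorem and are not supplied.
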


While it is a rather direct corollary to the above theorem, let us emphasize what this implies for the functions $\alpha_{n,k}$ defined in Lemma \ref{le:coord}.

\begin{corollary}\label{cor:linstat}
Let 

\begin{equation}
\gamma_{n,k}^{(N)}=\sum_{i=1}^{N}(\alpha_{n,k}(z_i)-\E(\alpha_{n,k}(z_i))).
\end{equation}

Then for any finite collection of $(n_i,k_i)$, with $n_i\geq 0$

\begin{equation}
(\gamma_{n_1,k_1}^{(N)},\gamma_{n_2,k_2}^{(N)},\cdots)
\end{equation}

\noindent converges jointly in law to a centered Gaussian random vector

\begin{equation}
(\gamma_{n_1,k_1},\gamma_{n_2,k_2},\cdots)
\end{equation}

\noindent whose entries are independent and the law of $\gamma_{0,k}$ equals that of 

\begin{equation}
\frac{\sqrt{\pi}}{j_{0,k}}A
\end{equation}

\noindent where $A$ is a standard Gaussian and for $n\geq 1$, the law of $\gamma_{n,k}$ equals that of 

\begin{equation}
\frac{\sqrt{\pi}}{j_{n,k}}\left(Z+\frac{1}{\sqrt{n}}W\right),
\end{equation}

\noindent where $Z$ and $W$ are i.i.d. standard complex Gaussians.

\end{corollary}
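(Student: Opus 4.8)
The plan is to obtain this as a consequence of Theorem \ref{th:linstat} applied to the real and imaginary parts of the functions $\alpha_{n,k}$, together with a computation of the limiting covariance structure. First I would fix a finite collection $(n_i,k_i)$ with $n_i \geq 0$ and consider the linear statistics $\gamma^{(N)}_{n,k} = \sum_{i=1}^N (\alpha_{n,k}(z_i) - \E \alpha_{n,k}(z_i))$; since the $\alpha_{n,k}$ are complex-valued I would decompose each into real and imaginary parts and note that by Lemma \ref{le:coord} these are (on a neighbourhood of $\overline{\mathbb{U}}$) smooth functions with the required growth, being finite combinations of $e_{\pm n,k}$ (which are smooth by their Bessel-function form) and functions of the type $\Re(w^m)$, $\Im(w^m)$, $\log|w|$ — all $C^1$ near $\overline{\mathbb{U}}$ and at most exponentially growing. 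Hence Theorem \ref{th:linstat}, applied via the Cramér–Wold device to arbitrary real-linear combinations of the $\Re\gamma^{(N)}_{n,k}$ and $\Im\gamma^{(N)}_{n,k}$, gives joint convergence to a centered Gaussian vector; the limiting covariances are read off from the quadratic form $\frac{1}{4\pi}\int_{\mathbb{U}} |\nabla f|^2 \, d^2 z + \frac12 \sum_k |k|\, |\widehat f(k)|^2$ by polarization.

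The heart of the argument is then the covariance computation. The key inputs are: (i) the $e_{n,k}$ are eigenfunctions of $\Delta$ with eigenvalue $-j_{n,k}^2$ and are $L^2(\mathbb{U})$-orthonormal, so $\frac{1}{4\pi}\int_{\mathbb{U}} \nabla e_{n,k} \cdot \overline{\nabla e_{m,l}} \, d^2 z = \frac{1}{4\pi} j_{n,k}^2 \delta_{n,m}\delta_{k,l}$ (after an integration by parts, legitimate since $e_{n,k}$ vanishes on $\partial\mathbb{U}$); (ii) $e_{n,k}$ restricted to the unit circle vanishes, so the boundary Fourier term $\frac12\sum |k||\widehat f(k)|^2$ sees only the monomial pieces $\overline w^{n}, w^{-n}, w^m/w^m$-type contributions coming from the second term of $\alpha_{n,k}$, whose restriction to $|z|=1$ is a single Fourier mode $e^{\pm i n\theta}$. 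Feeding the explicit coefficients of Lemma \ref{le:coord} into these two bilinear forms and simplifying (using $j_{-n,k} = j_{n,k}$, and that $\Re(w^m)$ on the circle has Fourier support $\{m,-m\}$ each with coefficient $1/2$), I expect the cross terms between the Green's-function part and the harmonic part, as well as all terms mixing different $(n,k)$, to vanish, leaving precisely $\mathrm{Var}(\gamma_{0,k}) = \pi/j_{0,k}^2$ for the $n=0$ modes and, for $n\geq 1$, a $2\times 2$ complex covariance matching that of $\frac{\sqrt\pi}{j_{n,k}}(Z + \tfrac{1}{\sqrt n}W)$ with $Z,W$ i.i.d. standard complex Gaussians — i.e. the bulk/Dirichlet term contributes the $Z$ part with variance $\pi/j_{n,k}^2$ and the boundary term contributes the independent $W$ part with variance $\pi/(n j_{n,k}^2)$. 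Independence across distinct $(n,k)$ follows from orthogonality of the $e_{n,k}$ and of distinct Fourier modes on the circle.

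I would organize this as: (1) verify the regularity/growth hypotheses so Theorem \ref{th:linstat} applies to $\Re\alpha_{n,k}, \Im\alpha_{n,k}$; (2) invoke Cramér–Wold to reduce joint Gaussian convergence to one-dimensional statements; (3) compute $\frac{1}{4\pi}\int_{\mathbb{U}}|\nabla(\cdot)|^2$ on the $\alpha_{n,k}$ using the eigenfunction/orthonormality relations; (4) compute the boundary Fourier term using that $e_{n,k}|_{\partial\mathbb{U}} \equiv 0$; (5) assemble via polarization to identify the full covariance matrix and match it to the claimed law. The main obstacle I anticipate is bookkeeping in step (3)–(5): the functions $\alpha_{n,k}(w)$ are complex and have two qualitatively different pieces (a Green's-function term proportional to $e_{-n,k}(w)$ and a boundary-harmonic monomial term), so one must carefully track real versus imaginary parts and the cross terms, and in particular confirm that the cross term between the two pieces integrates to zero — this is where the harmonicity of the monomial part inside $\mathbb{U}$ and the Dirichlet boundary condition of $e_{n,k}$ combine (an integration by parts shows $\int_{\mathbb{U}} \nabla e_{-n,k}\cdot \overline{\nabla h} = 0$ for $h$ harmonic, since $e_{-n,k}$ vanishes on the boundary). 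Once this vanishing is in hand the rest is a direct, if slightly lengthy, identification.
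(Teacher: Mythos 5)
Your overall strategy coincides with the paper's: Cram\'er--Wold applied to real-linear combinations of $\Re\alpha_{n,k}$ and $\Im\alpha_{n,k}$, an application of Theorem \ref{th:linstat}, and a computation of the limiting covariance from the Dirichlet energy plus the boundary Fourier sum, using orthonormality of the $e_{n,k}$, their vanishing on $\partial\mathbb{U}$, and harmonicity of the monomial pieces. However, there is a genuine flaw in the covariance bookkeeping you anticipate. You claim that ``all terms mixing different $(n,k)$ vanish'' and that independence across distinct $(n,k)$ follows from ``orthogonality of the $e_{n,k}$ and of distinct Fourier modes on the circle.'' This fails precisely when the two indices share the same $n\geq 1$ but have $k\neq l$: the harmonic pieces of $\alpha_{n,k}$ and $\alpha_{n,l}$ are the \emph{same} monomial $\overline{w}^{\,n}$ up to the scalars $\sqrt{\pi}/(n j_{n,k})$ and $\sqrt{\pi}/(n j_{n,l})$, and their boundary traces are proportional to the \emph{same} Fourier mode $e^{-in\theta}$, so neither the Dirichlet cross term (the paper computes it to be $2\pi^{2}/(n\,j_{n,k}j_{n,l})$ for $k\neq l$) nor the boundary contribution vanishes. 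Consequently the limiting vector is not independent across $k$ for fixed $n$: in the paper's proof the $n\geq 1$ limits are $\frac{\sqrt{\pi}}{j_{n,k}}\bigl(Z_{n,k}+\frac{1}{\sqrt{n}}W_{n}\bigr)$ with one $W_{n}$ \emph{shared} by all $k$, which is exactly the structure used to define the field $h$ in the following section; only the one-dimensional marginals agree with what you wrote (the word ``independent'' in the statement is loose, but the paper's own covariance formula and limit make the correlated structure explicit). If you carry out your plan assuming those cross terms vanish, the Cram\'er--Wold step will assign the wrong variance to any linear combination involving two coefficients with the same $n$, so you would not identify the correct joint law.

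Two smaller points. First, the attribution ``the Dirichlet term contributes the $Z$ part and the boundary term the $W$ part'' is also off: the $W$-variance $\pi/(n j_{n,k}^{2})$ arises half from the Dirichlet energy of the harmonic monomial part inside the disk and half from the term $\frac{1}{2}\sum_{k}|k||\widehat{f}(k)|^{2}$. Second, Theorem \ref{th:linstat} requires continuous partial derivatives in a neighborhood of $\overline{\mathbb{U}}$, and $\alpha_{n,k}$ is defined by different formulas inside and outside the disk; one must verify the $C^{1}$ matching across $|w|=1$, which the paper does via the identity $J_{|n|}'(j_{n,k})=-J_{|n|+1}(j_{n,k})$ — your remark that the pieces are ``finite combinations of smooth functions'' glosses over this.
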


\begin{remark} The condition $n_i\geq 0$ is not a restriction as $\gamma_{-n,k}^{(N)}=\overline{\gamma_{n,k}^{(N)}}$.
\end{remark}

\begin{proof}

By Cram\'er-Wold, it is enough to consider arbitrary linear combinations.  Thus let $t_{n,k},s_{n,k}\in \R$ for all $n\geq 0$ and $k\in \Z_+$ such that $t_{n,k},s_{n,k}\neq 0$  only for finitely many $n$ and $k$. Let us write

\begin{equation}
f(w)=\sum_{n=0}^{\infty}\sum_{k=1}^{\infty}\left(t_{n,k}\mathrm{Re}(\alpha_{n,k}(w))+s_{n,k}\mathrm{Im}(\alpha_{n,k}(w))\right)
\end{equation}

\noindent and 

\begin{equation}
\Gamma_N=\sum_{i=1}^{N}(f(w_i)-\E(f(w_i))).
\end{equation}

To apply the previous theorem, we need to check the regularity of $f$. It is clear that $\alpha_{n,k}$ are smooth in the unit disk and outside of it. To check smoothness across the boundary, we note that in polar coordinates, $w=re^{i\theta}$, we have for $r<1$

\begin{align}
\alpha_{n,k}(re^{i\theta})&=-\frac{2\sqrt{\pi}}{j_{n,k}^{2} J_{|n|+1}(j_{n,k})}J_{|n|}(j_{n,k}r)e^{-in\theta}-\mathbf{1}(n\neq 0)\frac{\sqrt{\pi}}{|n|j_{n,k}}r^{|n|}e^{-in\theta}.
\end{align}

\noindent and for $r\geq 1$,

\begin{equation}
\alpha_{n,k}(re^{i\theta})=\frac{2\sqrt{\pi}}{j_{n,k}}\left(\delta_{n,0}\log r-\mathbf{1}(n\neq 0)\frac{1}{2|n|}r^{-|n|}e^{-in\theta}\right).
\end{equation}

From the identity 

\begin{equation}
\frac{d}{dx}\left(\frac{J_n(x)}{x^{n}}\right)=-\frac{J_{n+1}(x)}{x^{n}},
\end{equation}

\noindent one can check that

\begin{equation}
J_{|n|}'(j_{n,k})=-J_{|n|+1}(j_{n,k}).
\end{equation}

Thus the radial derivate of $\alpha_{n,k}$ is continuous across the unit disk. From this, one can check the continuity of the partial derivatives of $\alpha_{n,k}$ and thus $f$.

\vspace{0.3cm}

We now need to find the Fourier coefficients $f$ restricted to the unit circle and the Dirichlet energy of $f$ in the unit disk. For the Fourier coefficients, we note that

\begin{equation}
\alpha_{n,k}(e^{i\theta})=-\mathbf{1}(n\neq 0)\frac{\sqrt{\pi}}{j_{n,k}|n|}e^{-in\theta}
\end{equation}

\noindent so that 

\begin{equation}
f(e^{i\theta})=-\sum_{n=0}^{\infty}\sum_{k=1}^{\infty}\frac{\sqrt{\pi}}{j_{n,k}n}\left(t_{n,k}\cos n\theta-s_{n,k}\sin n\theta\right)
\end{equation}

\noindent and for $n\neq 0$, 

\begin{equation}
\widehat{f}(n)=-\sum_{k=1}^{\infty}\frac{\sqrt{\pi}}{2j_{n,k}|n|}\left(t_{|n|,k}+i(\mathbf{1}(n>0)-\mathbf{1}(n<0))s_{|n|,k}\right).
\end{equation}

For the Dirichlet energy, we note that integrating by parts, $e_{n,k}$ vanishing on the unit circle, and the $L^{2}(\mathbb{U},d^{2}z)$ orthonormality of $e_{n,k}$ imply that

\begin{align}
\notag\int_{\mathbb{U}}\nabla e_{n,k}(z)\cdot \nabla e_{m,l}(z)d^{2}z&=-\int_{\mathbb{U}}e_{n,k}(z)\Delta e_{m,l}(z)d^{2}z\\
&=j_{m,l}^{2}\delta_{n,-m}\delta_{k,l}.
\end{align}

With a similar argument ($z\mapsto \overline{z}^{m}$ and $z\mapsto z^{m}$ are harmonic in for $m\in\Z_+$ and $e_{n,k}$ vanishes on the boundary)

\begin{equation}
\int_{\mathbb{U}}\nabla e_{n,k}(z)\cdot \nabla (\overline{z}^{m})d^{2}z=\int_{\mathbb{U}}\nabla e_{n,k}(z)\cdot \nabla (z^{m})d^{2}z=0.
\end{equation}

We also have for $n,m\in \Z_+$

\begin{equation}
\int_{\mathbb{U}}\nabla (z^{n})\cdot \nabla (z^{m})d^{2}z=\int_{\mathbb{U}}\nabla (\overline{z}^{n})\cdot \nabla (\overline{z}^{m})d^{2}z=0.
\end{equation}

Finally integrating by parts (or a direct calculation) gives

\begin{align}
\int_{\mathbb{U}}\nabla (z^{n})\cdot \nabla(\overline{z}^{m})d^{2}z=\int_0^{2\pi}e^{in\phi} m e^{-im\phi}d\phi=2\pi m \delta_{m,n}.
\end{align}

Thus 

\begin{equation}
\int_{\mathbb{U}}\nabla \alpha_{n,k}(z)\cdot \nabla \alpha_{m,l}(z)d^{2}z=\delta_{m,-n}\left(\delta_{k,l}\frac{4\pi^{2}}{j_{n,k}^{2}}+\mathbf{1}(n\neq 0)\frac{2\pi^{2}}{|n|j_{n,k}j_{n,l}}\right).
\end{equation}

From the fact $\overline{\alpha_{n,k}(z)}=\alpha_{-n,k}(z)$ one has 

\begin{equation}
\int_{\mathbb{U}}\nabla \mathrm{Re}(\alpha_{n,k}(z))\cdot \nabla \mathrm{Im}(\alpha_{m,l}(z))d^{2}z=0,
\end{equation}

\begin{align}
\notag &\int_{\mathbb{U}}\nabla \mathrm{Re}(\alpha_{n,k}(z))\cdot \nabla \mathrm{Re}(\alpha_{m,l}(z))d^{2}z\\
&=\frac{1}{2}(\delta_{m,n}+\delta_{m,-n})\left(\delta_{k,l}\frac{4\pi^{2}}{j_{n,k}^{2}}+\mathbf{1}(n\neq 0)\frac{2\pi^{2}}{|n|j_{n,k}j_{n,l}}\right),
\end{align}

\noindent and

\begin{align}
\notag &\int_{\mathbb{U}}\nabla \mathrm{Im}(\alpha_{n,k}(z))\cdot \nabla \mathrm{Im}(\alpha_{m,l}(z))d^{2}z\\
&=\frac{1}{2}(\delta_{m,n}-\delta_{m,-n})\left(\delta_{k,l}\frac{4\pi^{2}}{j_{n,k}^{2}}+\mathbf{1}(n\neq 0)\frac{2\pi^{2}}{|n|j_{n,k}j_{n,l}}\right).
\end{align}

We conclude that 

\begin{align}
\notag\frac{1}{4\pi}\int_{\mathbb{U}}|\nabla f(z)|^{2}d^{2}z&=\frac{1}{8\pi}\sum_{n=0}^{\infty}\sum_{k,l=1}^{\infty}(t_{n,k}t_{n,l}(1+\delta_{n,0})+s_{n,k}s_{n,l}(1-\delta_{n,0}))\\
&\notag\times\left(\delta_{k,l}\frac{4\pi^{2}}{j_{n,k}^{2}}+\mathbf{1}(n\neq 0)\frac{2\pi^{2}}{nj_{n,k}j_{n,l}}\right)\\
&=\frac{\pi}{2}\sum_{n=0}^{\infty}\sum_{k=1}^{\infty}\frac{t_{n,k}^{2}(1+\delta_{n,0})+s_{n,k}^{2}(1-\delta_{n,0})}{j_{n,k}^{2}}\\
\notag &+\frac{\pi}{4}\sum_{n,k,l=1}^{\infty}\frac{t_{n,k}t_{n,l}+s_{n,k}s_{n,l}}{nj_{n,k}j_{n,l}}.
\end{align}

Thus as $N\to\infty$,  $\Gamma_N$ converges in law to a centered Gaussian random variable with covariance

\begin{align}
&\notag\frac{\pi}{2}\sum_{n=0}^{\infty}\sum_{k=1}^{\infty}\frac{t_{n,k}^{2}(1+\delta_{n,0})+s_{n,k}^{2}(1-\delta_{n,0})}{j_{n,k}^{2}}+\frac{\pi}{4}\sum_{n,k,l=1}^{\infty}\frac{t_{n,k}t_{n,l}+s_{n,k}s_{n,l}}{nj_{n,k}j_{n,l}}\\
&+\frac{1}{2}\sum_{n\in \Z\setminus \lbrace 0\rbrace}|n||\widehat{f}(n)|^{2}\\
&\notag=\frac{\pi}{2}\sum_{n=0}^{\infty}\sum_{k=1}^{\infty}\frac{t_{n,k}^{2}(1+\delta_{n,0})+s_{n,k}^{2}(1-\delta_{n,0})}{j_{n,k}^{2}}+\frac{\pi}{2}\sum_{n,k,l=1}^{\infty}\frac{t_{n,k}t_{n,l}+s_{n,k}s_{n,l}}{nj_{n,k}j_{n,l}}.
\end{align}

Consider sequences of random variables $(A_k)_{k=1}^{\infty}$, $(B_{n,k})_{n,k=1}^{\infty}$, $(C_{n,k})_{n,k=1}^{\infty}$, $(D_n)_{n=1}^{\infty}$ and $(E_n)_{n=1}^{\infty}$ where all of the appearing random variables are i.i.d. standard Gaussians. From the covariance formula, it is then immediate that the distribution of $\lim_{N\to\infty}\Gamma_N$ agrees with that of 

\begin{equation}
V=\sqrt{\pi}\sum_{k=1}^{\infty}\frac{t_{0,k}A_k}{j_{0,k}}+\sqrt{\frac{\pi}{2}}\sum_{n,k=1}^{\infty}\frac{t_{n,k}\left(B_{n,k}+\frac{1}{\sqrt{n}}D_n\right)+s_{n,k}\left(C_{n,k}+\frac{1}{\sqrt{n}}E_n\right)}{j_{n,k}}.
\end{equation}

If we then introduce $(Z_{n,k})_{n,k=1}^{\infty}$ i.i.d. standard complex Gaussians and $(W_n)_{n=1}^{\infty}$ i.i.d. standard complex Gaussians independent of the $Z$-variables, and define $Q_{n,k}=\frac{\sqrt{\pi}}{j_{n,k}}\left(Z_{n,k}+\frac{1}{\sqrt{n}}W_n\right)$, we see that the law of $V$ agrees with the law of 

\begin{equation}
\sum_{k=1}^{\infty}\frac{\sqrt{\pi}t_{0,k}}{j_{0,k}}A_k+\sum_{n,k=1}^{\infty}\left(t_{n,k}\mathrm{Re}(Q_{n,k})+s_{n,k}\mathrm{Im}(Q_{n,k})\right),
\end{equation}

\noindent which by Cram\'er-Wold is what was claimed.

\end{proof}

\section{The Gaussian field}

Plugging the corollary of the main result of \cite{rv} that we proved in the previous section into the expansion $\log|z-w|=\sum_{n,k}\alpha_{n,k}(w)e_{n,k}(z)$ motivates defining the following object.

\begin{definition}
Let $(A_k)_{k=1}^{\infty}$ be i.i.d. standard Gaussians, let $(Z_{n,k})_{n,k=1}^{\infty}$ be i.i.d. standard complex Gaussians independent of the $A$ vairables, and let $(W_n)_{n=1}^{\infty}$ be i.i.d. standard complex Gaussians independent of the $A$ and $Z$ variables. Denote by $h$ the formal sum

\begin{equation}
h=\sqrt{\pi}\sum_{k=1}^{\infty}\frac{A_k}{j_{0,k}}e_{0,k}+2\sqrt{\pi}\mathrm{Re}\left(\sum_{n,k=1}^{\infty}\frac{1}{j_{n,k}}\left(Z_{n,k}+\frac{1}{\sqrt{n}}W_n\right)e_{n,k}\right)
\end{equation}
\end{definition}

This is not simply a formal sum but can be realized as a $\mathcal{H}^{-s}$-valued random variable:

\begin{lemma}
For any $s>0$, the series defining $h$ converges almost surely in $\mathcal{H}^{-s}$.
\end{lemma}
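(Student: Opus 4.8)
The plan is to show that the partial sums of the series defining $h$ form a Cauchy sequence in $L^2(\Omega; \mathcal{H}^{-s})$ and simultaneously show almost sure convergence, most cleanly by checking that $\E \|h\|_{-s}^2 < \infty$ together with a standard argument for the almost sure statement (e.g.\ monotone convergence applied to the tail sums, or an appeal to the Itô--Nisio theorem for sums of independent Banach-space-valued random variables). Concretely, write $h = \sum_{n,k} c_{n,k} e_{n,k}$ with $c_{0,k} = \sqrt{\pi} A_k / j_{0,k}$ and, for $n \geq 1$, $c_{n,k} = \tfrac{\sqrt{\pi}}{j_{n,k}}(Z_{n,k} + \tfrac{1}{\sqrt n} W_n)$, with $c_{-n,k} = \overline{c_{n,k}}$. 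By definition of the norm on $\mathcal{H}^{-s}$,
\begin{equation}
\E \|h\|_{-s}^2 = \sum_{n \in \Z, k \in \Z_+} \E|c_{n,k}|^2 \, j_{n,k}^{-2s}.
\end{equation}

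Next I would bound $\E|c_{n,k}|^2$. For $n = 0$ this is $\pi / j_{0,k}^2$; for $|n| \geq 1$ it is $\tfrac{\pi}{j_{n,k}^2}(1 + \tfrac{1}{|n|}) \leq \tfrac{2\pi}{j_{n,k}^2}$, using independence of $Z_{n,k}$ and $W_n$ and $\E|Z_{n,k}|^2 = \E|W_n|^2 = 1$. Hence
\begin{equation}
\E \|h\|_{-s}^2 \leq 2\pi \sum_{n \in \Z, k \in \Z_+} \frac{1}{j_{n,k}^{2s+2}}.
\end{equation}
So everything reduces to showing $\sum_{n,k} j_{n,k}^{-2s-2} < \infty$ for every $s > 0$. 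Here I invoke the lower bound \eqref{zeroineq}, namely $j_{n,k}^2 > n^2 + (k - \tfrac14)^2 \pi^2$, which gives $j_{n,k}^{-2s-2} \leq (n^2 + (k-\tfrac14)^2\pi^2)^{-(s+1)}$. Summing over $k \in \Z_+$ and $n \in \Z$, this is comparable to $\int_{\R^2} (1 + |x|^2)^{-(s+1)} \, d^2x$, which converges precisely because $s + 1 > 1$, i.e.\ for every $s > 0$. (One can also simply note $\sum_{n,k}(n^2 + (k-\tfrac14)^2\pi^2)^{-(s+1)} \leq \sum_{n,k}(n^2+1)^{-(s+1)/2}((k-\tfrac14)^2\pi^2)^{-(s+1)/2} \cdot C$ after splitting off finitely many terms, or compare with a double integral directly.) Therefore $\E\|h\|_{-s}^2 < \infty$, so in particular $h \in \mathcal{H}^{-s}$ almost surely and the series converges in $L^2(\Omega;\mathcal{H}^{-s})$.

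For the almost sure convergence of the series itself (not merely finiteness of the limit's norm), I would argue as follows: the summands are independent $\mathcal{H}^{-s}$-valued random variables (grouping the $Z_{n,k} e_{n,k}$ terms by $(n,k)$ and the $W_n$-terms by $n$, or treating the real and imaginary parts of the relevant $e_{n,k}$ as the coordinate directions), the partial sums converge in $L^2$ hence in probability, and so by the Itô--Nisio theorem they converge almost surely in $\mathcal{H}^{-s}$. Alternatively, and more elementarily, since the terms are orthogonal in $\mathcal{H}^{-s}$ one has for the tail $R_M = \sum_{j_{n,k} \geq M} c_{n,k} e_{n,k}$ that $\E\|R_M\|_{-s}^2 = \sum_{j_{n,k}\geq M}\E|c_{n,k}|^2 j_{n,k}^{-2s} \to 0$, and a Borel--Cantelli argument along a subsequence combined with monotonicity of the tails in the ordered index set gives almost sure convergence. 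I do not expect any genuine obstacle here; the only point requiring a little care is the convergence of the numerical series $\sum_{n,k} j_{n,k}^{-2s-2}$, which is exactly what \eqref{zeroineq} is there to supply, and the bookkeeping of which real-linear combinations of the $e_{n,k}$ serve as independent coordinate directions so that the independence hypothesis of Itô--Nisio is correctly applied.
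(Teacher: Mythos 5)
Your proof is correct and follows essentially the same route as the paper: compute $\E\|h\|_{-s}^2$ coefficientwise and use the zero bound \eqref{zeroineq} to show the resulting series $\sum_{n,k}j_{n,k}^{-2s-2}$ converges for every $s>0$. The only difference is that you spell out the almost sure convergence of the series itself (via Itô--Nisio or tail estimates), a point the paper leaves implicit since the norm is a sum of nonnegative terms; this is a harmless refinement, not a different argument.
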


\begin{proof}
We have 

\begin{equation}
||h||_{-s}^{2}=\pi \sum_{k=1}^{\infty}\frac{A_k^{2}}{j_{0,k}^{2+2s}}+2\pi \sum_{n,k=1}^{\infty}\frac{\left|Z_{n,k}+\frac{1}{\sqrt{n}}W_n\right|^{2}}{j_{n,k}^{2+2s}}
\end{equation}

\noindent so that 

\begin{equation}
\E\left(||h||_{-s}^{2}\right)=\pi \sum_{k=1}^{\infty}\frac{1}{j_{0,k}^{2+2s}}+2\pi \sum_{n,k=1}^{\infty}\frac{\left(1+\frac{1}{n}\right)}{j_{n,k}^{2+2s}}.
\end{equation}

Recalling the estimate \eqref{zeroineq}, we see that there are some positive constants $C_1,C_2$ such that 

\begin{equation}
\E\left(||h||_{-s}^{2}\right)\leq C_1 \sum_{k=1}^{\infty }k^{-2-2s}+C_2\sum_{n,k=1}^{\infty}(n^{2}+k^{2})^{-1-s}
\end{equation}

\noindent which is finite for all $s>0$. This implies that $||h||_{-s}<\infty$ almost surely so $h\in\mathcal{H}^{-s}$ almost surely.
\end{proof}

\begin{remark} Formally, one finds that the covariance kernel of the field is (for $|z|,|w|<1$)

\begin{align}
\notag\E(h(z)h(w))&=\pi\sum_{k=1}^{\infty}\frac{1}{j_{0,k}^{2}}e_{0,k}(z)e_{0,k}(w)\\
\notag &+\pi\sum_{n,k=1}^{\infty}\frac{1}{j_{n,k}^{2}}\left(e_{n,k}(z)e_{-n,k}(w)+e_{-n,k}(z)e_{n,k}(w)\right)\\
&+\pi\sum_{n,k,l=1}^{\infty}\frac{1}{nj_{n,k}j_{n,l}}(e_{n,k}(z)e_{-n,l}(w)+e_{-n,k}(z)e_{n,l}(w))\\
&=\notag  -\pi G_D(z,w)+\frac{1}{4}\sum_{n=1}^{\infty}\frac{1}{n}\left(z^{n}\overline{w}^{n}+\overline{z}^{n}w^{n}\right)\\
\notag &=-\pi G_D(z,w)-\frac{1}{2}\log |1-z\overline{w}|\\
&=-\frac{1}{2}\log |z-w|.\notag
\end{align}

Here we used Lemma \ref{le:powerexp} and Lemma \ref{le:logrep}. Making this fact precise would be simple, but we skip it.

\vspace{0.3cm}

As the covariance of the field is (proportional to) $-\log |z-w|$ which is the covariance of the whole plane Gaussian Free Field, we interpret this as the field being the restriction of the GFF to the unit disk, or the free field with free boundary conditions on the unit disk. Of course the whole plane field is defined only up to a constant, so in a sense we fix the constant in our result.

\vspace{0.3cm}

We also point out that this field is ($\frac{1}{2}$ times) a sum of a Gaussian Free Field with zero Dirichlet boundary conditions on the disk and an independent harmonic extension of a Gaussian field defined on the unit disk through the random Fourier series $\mathrm{Re}\sum_{n=1}^{\infty}\frac{1}{\sqrt{n}}e^{in\theta}Z_n$, where $Z_n$ are i.i.d. standard complex Gaussians.
\end{remark}

\section{Convergence of the logarithm of the absolute value of the characteristic polynomial}

The basic idea for proving that the logarithm of the absolute value of the centered characteristic polynomial converges to $h$ is to control the behavior of $\gamma_{n,k}^{(N)}$ as $n$ or $k$ grows with $N$ and if $\gamma_{n,k}^{(N)}$ does not grow too fast, one is able to prove convergence in some $\mathcal{H}^{-s}$ for the suitable $s$ depending on the growth rate of $\gamma_{n,k}^{(N)}$. We suspect that actually $\gamma_{n,k}^{(N)}$ decays with $n$ and $k$ and this decay might actually be fast enough to prove convergence in any $\mathcal{H}^{-s}$ for $s>0$ (see the remarks at the end of the section), though our focus is on determining the constant part of the field instead of the precise roughness of the field.

\vspace{0.3cm}

Before going into the actual statement of the result and its proof, let us prove the variance bound for $\gamma_{n,k}^{(N)}$ we shall make use of. We suspect that this is a very rough estimate and one could do much better, but this is extremely simple and sufficient for us.

\begin{lemma}\label{le:varbound}
There exists a constant $C>0$ such that for $n\in \Z$, $k\in \Z_+$ one has (recall the notation of Corollary \ref{cor:linstat})
\begin{equation}
\E\left(\left|\gamma_{n,k}^{(N)}\right|^{2}\right)\leq C j_{n,k}^{2}.
\end{equation}

\end{lemma}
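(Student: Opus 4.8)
<br>

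The plan is to prove the bound $\E(|\gamma_{n,k}^{(N)}|^2)\le Cj_{n,k}^2$ by a direct computation using the determinantal structure of the Ginibre ensemble, rather than relying on the limiting Gaussian variance from Corollary \ref{cor:linstat} (which only controls the $N\to\infty$ limit and gives no uniformity in $N$). Since $\gamma_{n,k}^{(N)}=\sum_{i=1}^N(\alpha_{n,k}(z_i)-\E\alpha_{n,k}(z_i))$ is a centered linear statistic, its variance is
\[
\E\left(\left|\gamma_{n,k}^{(N)}\right|^2\right)=\sum_{i=1}^N\E\big(|\alpha_{n,k}(z_i)|^2\big)-\sum_{i=1}^N\big|\E\alpha_{n,k}(z_i)\big|^2-\sum_{i\ne j}\mathrm{Cov}\big(\alpha_{n,k}(z_i),\overline{\alpha_{n,k}(z_j)}\big),
\]
and the crudest possible bound is simply $\E(|\gamma_{n,k}^{(N)}|^2)\le \E(|\sum_i \alpha_{n,k}(z_i)|^2)$... but that is not quite true for a centered variable. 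Instead I would use $\mathrm{Var}(X)\le \E(|X|^2)$ applied to $X=\sum_i\alpha_{n,k}(z_i)$ only after recalling $\mathrm{Var}(X)=\E|X-\E X|^2$; the clean inequality is $\E(|\gamma_{n,k}^{(N)}|^2)=\mathrm{Var}\big(\sum_i\alpha_{n,k}(z_i)\big)\le \E\big(\big|\sum_i\alpha_{n,k}(z_i)\big|^2\big)$ is false in general, so I will instead bound the variance directly via the correlation kernel.

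First I would recall that the eigenvalues of $G_N$ form a determinantal point process with kernel $K_N(z,w)=\frac{N}{\pi}e^{-\frac N2(|z|^2+|w|^2)}\sum_{\ell=0}^{N-1}\frac{(N z\bar w)^\ell}{\ell!}$, so that for a test function $g$,
\[
\mathrm{Var}\left(\sum_{i=1}^N g(z_i)\right)=\int_\C |g(z)|^2 K_N(z,z)\,d^2z-\int_{\C^2}g(z)\overline{g(w)}\,|K_N(z,w)|^2\,d^2z\,d^2w,
\]
(taking $g$ real this is the standard formula; for complex $g$ one symmetrizes appropriately). Dropping the subtracted nonnegative term gives the crude bound $\mathrm{Var}(\sum_i g(z_i))\le \int_\C|g(z)|^2K_N(z,z)\,d^2z$. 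Since $K_N(z,z)\le \frac N\pi$ for all $z$ (indeed $K_N(z,z)=\frac N\pi e^{-N|z|^2}\sum_{\ell<N}(N|z|^2)^\ell/\ell!\le \frac N\pi$), we get $\E(|\gamma_{n,k}^{(N)}|^2)\le \frac N\pi \int_\C |\alpha_{n,k}(z)|^2\,d^2z$. The remaining task is then to show $\int_\C|\alpha_{n,k}(z)|^2\,d^2z\le C'j_{n,k}^2/N$... but that has the wrong $N$-dependence, so in fact I need to be more careful: the correct route is to keep the cancellation, or better, to use the one-point density's concentration on the unit disk. Actually the clean fix: $\alpha_{n,k}$ decays like $|w|^{-|n|}$ at infinity, is bounded, and $\int_\C|\alpha_{n,k}|^2d^2z<\infty$ uniformly, but the factor $N$ kills us; so instead I bound using $\mathrm{Var}\le \int|g(z)-g(w)|^2|K_N(z,w)|^2\cdot\frac12$ — i.e. exploit the true variance formula $\frac12\int|g(z)-g(w)|^2|K_N(z,w)|^2d^2zd^2w$. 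Then $|\alpha_{n,k}(z)-\alpha_{n,k}(w)|\le \|\nabla\alpha_{n,k}\|_\infty|z-w|$ near the bulk, and $\int|z-w|^2|K_N(z,w)|^2 d^2z\,d^2w = O(1/N)\cdot N = O(1)$ times $\|\nabla\alpha_{n,k}\|_\infty^2$; using Theorem \ref{th:eigest} and Lemma \ref{le:coord}, $\|\nabla\alpha_{n,k}\|_\infty\lesssim j_{n,k}^3/j_{n,k}^2 + (\text{lower order}) \lesssim j_{n,k}$, which after squaring gives exactly $j_{n,k}^2$.

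So the key steps, in order, are: (1) write $\gamma_{n,k}^{(N)}$ as a centered linear statistic and invoke the determinantal variance identity $\mathrm{Var}(\sum g(z_i))=\frac12\int_{\C^2}|g(z)-g(w)|^2|K_N(z,w)|^2\,d^2z\,d^2w$ (valid for real $g$; handle $\mathrm{Re}\,\alpha_{n,k}$ and $\mathrm{Im}\,\alpha_{n,k}$ separately and combine); (2) bound $|g(z)-g(w)|\le \|\nabla g\|_{L^\infty(\C)}|z-w|$ — here I must check $\alpha_{n,k}$ and its gradient are globally bounded, using the explicit polar-coordinate formulas from the proof of Corollary \ref{cor:linstat} together with Theorem \ref{th:eigest}, obtaining $\|\nabla\alpha_{n,k}\|_\infty\le C j_{n,k}$; (3) bound $\int_{\C^2}|z-w|^2|K_N(z,w)|^2\,d^2z\,d^2w$ by a constant independent of $N$, which follows from the explicit Gaussian form of $K_N$ (the integral is a standard moment computation: $|K_N(z,w)|^2$ integrated against $|z-w|^2$ gives something like $\int K_N(z,z)\cdot(\text{local variance }\sim 1/N)\,d^2z \sim 1$); (4) assemble to get $\E(|\gamma_{n,k}^{(N)}|^2)\le C j_{n,k}^2$. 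The main obstacle I anticipate is step (2): verifying that $\|\nabla\alpha_{n,k}\|_{L^\infty(\C)}\le Cj_{n,k}$ requires combining the interior Bessel estimates (Theorem \ref{th:eigest}, giving the $j_{n,k}^3$ gradient bound on $e_{n,k}$, but $\alpha_{n,k}$ carries a $1/j_{n,k}^2$ prefactor) with careful control of the polynomial/harmonic pieces ($r^{|n|}e^{-in\theta}$ inside, $r^{-|n|}e^{-in\theta}$ outside, and $\log r$ for $n=0$) whose gradients grow at worst like $|n|\le j_{n,k}$, plus checking the pieces match up $C^1$ across $\partial\mathbb U$ as already noted in the paper; a secondary subtlety is the unboundedness of $\log|w|$ for the $n=0$ term as $|w|\to\infty$, which forces either a truncation argument exploiting the Gaussian decay of $K_N$ off the unit disk, or replacing the Lipschitz bound by a direct estimate on $\int|\alpha_{0,k}(z)-\alpha_{0,k}(w)|^2|K_N(z,w)|^2$ using that $\alpha_{0,k}$ grows only logarithmically while $|K_N(z,w)|^2$ decays super-polynomially.
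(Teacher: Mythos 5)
Your final plan is essentially the paper's proof: the variance identity $\frac12\iint|f(z)-f(w)|^2|K_N(z,w)|^2\,d^2z\,d^2w$, the Lipschitz bound via $\|\nabla\alpha_{n,k}\|_{L^\infty(\C)}\le Cj_{n,k}$ (from Theorem \ref{th:eigest} and Lemma \ref{le:coord}), and the exact evaluation of $\iint|z-w|^2|K_N(z,w)|^2\,d^2z\,d^2w$ as an $N$-independent constant (the paper computes it to be $1$). Your worry about the $n=0$ logarithmic term is unnecessary: the Lipschitz bound only requires $\nabla\alpha_{0,k}$ to be bounded, and $|\nabla\log r|=1/r\le 1$ for $r\ge 1$, so no truncation is needed.
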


\begin{proof}
Our starting point is the following formula that follows from the determinantal structure of the distribution of the eigenvalues. For any (measurable) $f:\C\to\C$,

\begin{align}
\notag &\E\left(\left|\sum_{i=1}^{N}\left(f(z_i)-\E(f(z_i))\right)\right|^{2}\right)\\
&=\frac{1}{2}\left(\frac{N}{\pi}\right)^{2}\int_{\C\times \C}|f(z)-f(w)|^{2}\left|\sum_{k=0}^{N-1}\frac{(Nz\overline{w})^{k}}{k!}\right|^{2}e^{-N|z|^{2}-N|w|^{2}}d^{2}zd^{2}w.
\end{align}

Now if we assume that $f$ is smooth and $|\nabla f|$ is bounded, we find

\begin{align}
\notag &\E\left(\left|\sum_{i=1}^{N}\left(f(z_i)-\E(f(z_i))\right)\right|^{2}\right)\\
&\leq \frac{||  \nabla f  ||_{\infty}^{2}}{2}\left(\frac{N}{\pi}\right)^{2}\int_{\C\times \C}|z-w|^{2}\left|\sum_{k=0}^{N-1}\frac{(Nz\overline{w})^{k}}{k!}\right|^{2}e^{-N|z|^{2}-N|w|^{2}}d^{2}zd^{2}w.
\end{align}

The integral here is simple enough to calculate exactly so let us do that 

\begin{align*}
&\frac{1}{2}\left(\frac{N}{\pi}\right)^{2}\int_{\C\times \C}|z-w|^{2}\left|\sum_{k=0}^{N-1}\frac{(Nz\overline{w})^{k}}{k!}\right|^{2}e^{-N|z|^{2}-N|w|^{2}}d^{2}zd^{2}w\\
&=\frac{1}{2}\left(\frac{N}{\pi}\right)^{2}\sum_{k,l=0}^{N-1}\frac{N^{k+l}}{k!l!}\int_{\C\times\C}(|z|^{2}-\overline{z}w-\overline{w}z+|w|^{2})\\
&\times z^{k}\overline{z}^{l}w^{l}\overline{w}^{k}e^{-N|z|^{2}-N|w|^{2}}d^{2}zd^{2}w\\
&=\left(\frac{N}{\pi}\right)^{2}\Bigg(\sum_{k=0}^{N-1}\frac{N^{2k}}{k!^{2}}\int_\C |z|^{2k+2}e^{-N|z|^{2}}d^{2}z\int_\C |w|^{2k}e^{-N|w|^{2}}d^{2}w\\
&-\sum_{k=0}^{N-2}\frac{N^{2k+1}}{k!(k+1)!}\left(\int_\C |z|^{2k+2}e^{-N|z|^{2}}d^{2}z\right)^{2}\Bigg)\\
&=\left(\frac{N}{\pi}\right)^{2}\left(\sum_{k=0}^{N-1}\pi^{2}(k+1)N^{-3}-\sum_{k=0}^{N-2}\pi^{2}(k+1)N^{-3}\right)\\
&=1.
\end{align*}

We used here the simple fact that for $m\in \Z_+$,

\begin{equation}
\int_\C |z|^{2m}e^{-N|z|^{2}}d^{2}z=\frac{\pi m!}{N^{m+1}}.
\end{equation}

We thus conclude that 

\begin{equation}
\E\left(\left|\gamma_{n,k}^{(N)}\right|^{2}\right)\leq ||\nabla \alpha_{n,k}||_{\infty}^{2}.
\end{equation}

From Lemma \ref{le:coord}, we see that there exist positive constants $C_1$ and $C_2$ such that 

\begin{equation}
\sup_{z\in \mathbb{U}}|\nabla \alpha_{n,k}(z)|\leq \frac{C_1}{j_{n,k}^{2}}||\nabla e_{-n,k}(z)||_{L^{\infty}(\mathbb{U})}+C_2\frac{1}{j_{n,k}}
\end{equation}

\noindent while outside of the disk we have a positive constant $C_3$ such that 

\begin{equation}
\sup_{z\notin \mathbb{U}}|\nabla \alpha_{n,k}(z)|\leq \frac{C_3}{j_{n,k}}.
\end{equation}

Making use of Theorem \ref{th:eigest}, we find that for some positive constant $C$

\begin{equation}
||\nabla \alpha_{n,k}||_\infty\leq C j_{n,k},
\end{equation}

\noindent which yields the claim.

\end{proof}

Let us now introduce notation for the centered logarithm of the absolute value of the characteristic polynomial.

\begin{definition}
For $z\in \mathbb{U}$ and $N\in \Z_+$, write 

\begin{equation}
h_N(z)=\sum_{i=1}^{N}\left(\log |z-z_i|-\E(\log |z-z_i|)\right).
\end{equation}

\end{definition}

We can now prove the convergence of $h_N$ to $h$ in the sense of finite dimensional distributions in the space $\mathcal{H}^{-s}$ for any $s>2$.  The argument is almost identical to that in \cite{fks}.

\begin{proposition}
Let $s>2$ and $k\in \Z_+$. For any elements $f_1,...,f_k\in \mathcal{H}^{s}$, $(h_N(f_1),...,h_N(f_k))$ converges in law to $(h(f_1),...,h(f_k))$.
\end{proposition}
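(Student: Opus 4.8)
The plan is to run the standard truncation argument that identifies $h_N$ and $h$ with their coefficient sequences and reduces the statement to the finite-dimensional convergence of Corollary~\ref{cor:linstat}. Write $f_i=\sum_{n\in\Z,k\in\Z_+}a_{n,k}^{(i)}e_{n,k}$. From Lemma~\ref{le:coord} and the definition of $\gamma_{n,k}^{(N)}$ one has $h_N=\sum_{n,k}\gamma_{n,k}^{(N)}e_{n,k}$ in $\mathcal{H}^{-s}$, so the duality pairing gives $h_N(f_i)=\sum_{n\in\Z,k\in\Z_+}\gamma_{n,k}^{(N)}a_{-n,k}^{(i)}$, and likewise $h(f_i)=\sum_{n,k}\gamma_{n,k}a_{-n,k}^{(i)}$ with the $\gamma_{n,k}$ of Corollary~\ref{cor:linstat}, extended to $n<0$ by $\gamma_{-n,k}=\overline{\gamma_{n,k}}$. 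For $M\in\Z_+$ I would set $h_N^M(f_i)=\sum_{|n|\le M,\,k\le M}\gamma_{n,k}^{(N)}a_{-n,k}^{(i)}$ and $h^M(f_i)=\sum_{|n|\le M,\,k\le M}\gamma_{n,k}a_{-n,k}^{(i)}$, and then prove: (a) for each fixed $M$, $(h_N^M(f_1),\dots,h_N^M(f_k))$ converges in law to $(h^M(f_1),\dots,h^M(f_k))$ as $N\to\infty$; (b) the truncation error $h_N(f_i)-h_N^M(f_i)$ is small in $L^2$ uniformly in $N$, and similarly for $h$; (c) combine (a) and (b) through the usual approximation lemma for weak convergence.

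Step (a) is essentially free: $h_N^M(f_i)$ is a fixed linear combination of the finitely many entries of $(\gamma_{n,k}^{(N)})_{|n|\le M,\,k\le M}$, which by Corollary~\ref{cor:linstat} (and the symmetry $\gamma_{-n,k}^{(N)}=\overline{\gamma_{n,k}^{(N)}}$ to reach the negative indices) converges jointly in law, so the continuous mapping theorem applies. Step (b) is the main point. Putting $S_M=\{(n,k)\in\Z\times\Z_+:|n|>M\ \text{or}\ k>M\}$, the triangle inequality in $L^2$, Lemma~\ref{le:varbound}, and Cauchy--Schwarz yield
\begin{equation}
\left(\E\left|h_N(f_i)-h_N^M(f_i)\right|^2\right)^{1/2}\le\sqrt{C}\sum_{(n,k)\in S_M}\left|a_{-n,k}^{(i)}\right|j_{n,k}\le\sqrt{C}\,\|f_i\|_s\left(\sum_{(n,k)\in S_M}j_{n,k}^{2-2s}\right)^{1/2},
\end{equation}
where $j_{-n,k}=j_{n,k}$ was used to bound the partial sum $\sum_{S_M}|a_{-n,k}^{(i)}|^2 j_{n,k}^{2s}$ by $\|f_i\|_s^2$. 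By \eqref{zeroineq}, for $s>1$ one has $j_{n,k}^{2-2s}<\left(n^2+\left(k-\frac{1}{4}\right)^2\pi^2\right)^{1-s}$, and summing the right-hand side over $(n,k)\in\Z\times\Z_+$ gives a finite total precisely when $s>2$. Hence for $s>2$ the quantity $\varepsilon_M:=\sum_{(n,k)\in S_M}j_{n,k}^{2-2s}$ is the tail of a convergent series and tends to $0$, so $\sup_N\E|h_N(f_i)-h_N^M(f_i)|^2\le C\|f_i\|_s^2\,\varepsilon_M\to0$ as $M\to\infty$. The same estimate with $\E|\gamma_{n,k}|^2\le 2\pi j_{n,k}^{-2}\le 2\pi j_{n,k}^2$ in place of Lemma~\ref{le:varbound} gives $\E|h(f_i)-h^M(f_i)|^2\to0$, hence $(h^M(f_1),\dots,h^M(f_k))$ converges in law to $(h(f_1),\dots,h(f_k))$ as $M\to\infty$.

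For step (c), Chebyshev's inequality and the uniform $L^2$ bound of step (b) give $\lim_{M\to\infty}\limsup_{N\to\infty}\Prob\big(\max_i|h_N(f_i)-h_N^M(f_i)|>\delta\big)=0$ for every $\delta>0$; together with step (a) and the $M\to\infty$ convergence of the truncated limits this is exactly the hypothesis of the standard approximation theorem for convergence in law, which yields $(h_N(f_1),\dots,h_N(f_k))\to(h(f_1),\dots,h(f_k))$. The only genuinely delicate point is making the truncation error small \emph{uniformly in $N$}, which is precisely the role of Lemma~\ref{le:varbound}; the threshold $s>2$ is forced by the crude variance bound $\E|\gamma_{n,k}^{(N)}|^2\lesssim j_{n,k}^2$, since it requires $\sum_{n,k}j_{n,k}^{2-2s}<\infty$, and a sharper (decaying) bound on these variances would permit any $s>0$, as indicated in the discussion preceding the statement.
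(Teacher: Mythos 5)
Your proposal is correct and follows essentially the same route as the paper: expand $h_N$ in the eigenfunction coefficients $\gamma_{n,k}^{(N)}$, truncate, apply Corollary~\ref{cor:linstat} to the finitely many retained modes, and control the tail uniformly in $N$ via Lemma~\ref{le:varbound}, Cauchy--Schwarz, and the summability of $j_{n,k}^{2-2s}$ for $s>2$, concluding by the standard approximation (Slutsky-type) theorem. The only cosmetic differences are that the paper first reduces to a single test function by Cram\'er--Wold and truncates over $n^2+k^2\le M$ rather than a box, neither of which changes the substance.
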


\begin{proof}
By Cram\'er-Wold, it's enough to consider linear combinations and by linearity, this reduces to proving that for any $f\in \mathcal{H}^{s}$, $h_N(f)$ converges in law to $h(f)$. Let us write 

\begin{equation}
f=\sum_{n\in \Z,k\in \Z_+}a_{n,k}e_{n,k}
\end{equation}

\noindent so that 

\begin{equation}
h_N(f)=\sum_{n\in \Z,k\in \Z_+}\gamma_{n,k}^{(N)}a_{-n,k}.
\end{equation}

We now introduce a cut-off into the sums: let $M\in \Z_+$ and define

\begin{equation}
\epsilon_{N,M}=\sum_{\stackrel{n\in \Z,k\in \Z_+}{n^{2}+k^{2}>M}}\gamma_{n,k}^{(N)}a_{-n,k}.
\end{equation}

As Corollary \ref{cor:linstat} implies that for fixed $M$, as $N\to\infty$

\begin{equation}
h_N(f)-\epsilon_{N,M}\stackrel{d}{\to}\sum_{\stackrel{n\in \Z,k\in \Z_+}{n^{2}+k^{2}\leq M}}\gamma_{n,k}a_{-n,k}
\end{equation}

\noindent and as we let $M\to\infty$, this in turn converges to $h(f)$. Thus by Slutsky's theorem, to prove that $h_N(f)\stackrel{d}{\to}h(f)$, it is enough to show that $\epsilon_{N,M}$ converges to zero in probability as we first let $N\to\infty$ and then $M\to \infty$. To see that this occurs, we note that by Cauchy-Schwarz (applied to the sum, not the integral) and our variance bound (\ref{le:varbound})

\begin{align}
\notag\E\left(\left|\epsilon_{N,M}\right|^{2}\right)&\leq \sum_{\stackrel{n\in \Z,k\in \Z_+}{n^{2}+k^{2}> M}}\E\left(\left|\gamma_{n,k}^{(N)}\right|^{2}\right)j_{n,k}^{-2s} \sum_{\stackrel{n\in \Z,k\in \Z_+}{n^{2}+k^{2}>M}}|a_{-n,k}|^{2}j_{n,k}^{2s}\\
&\leq C||f||_s\sum_{\stackrel{n\in \Z,k\in \Z_+}{n^{2}+k^{2}> M}}j_{n,k}^{2-2s}.
\end{align}

As $s>2$, \eqref{zeroineq} implies that this last series converges for any $M\in \Z_+$ so it tends to zero as $M\to\infty$. This estimate was uniform in $N$, so we see that $\epsilon_{N,M}$ tends to zero in probability if we first let $N\to\infty$ and then $M\to\infty$. Thus $h_N(f)$ converges in law to $h(f)$.

\end{proof}

\begin{remark} It is clear from the proof that improving the bound $\E(|\gamma_{n,k}|^{2})\leq Cj_{n,k}^{2}$ to say something of the form $j_{n,k}^{2\alpha}$ will improve the lower bound of $s$ to $s>1+\alpha$. So in particular if there were decay of the form $j_{n,k}^{-2}$, one would have convergence in $\mathcal{H}^{-s}$ for any $s>0$.
\end{remark}

\vspace{0.3cm}

In an infinite dimensional space, weak convergence is a stronger form of convergence than convergence of finite dimensional distributions. A sufficient condition to strengthen convergence of finite dimensional distributions to weak convergence in the Hilbert case situation, is tightness (this follows essentially from Prohorov's theorem - see e.g. \cite{merkle}). Let us now prove tightness and thus weak convergence. Again the argument is almost identical to that in \cite{fks}.

\begin{proposition}
For any $s>2$, $h_N$ converges weakly to $h$ in $\mathcal{H}^{-s}$.
\end{proposition}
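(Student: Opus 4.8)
The plan is to upgrade the finite-dimensional convergence established in the previous proposition to weak convergence in $\mathcal{H}^{-s}$ by verifying tightness of the sequence $(h_N)_{N\geq 1}$; by the remark preceding the statement (and Prohorov's theorem in the separable Hilbert space setting), finite-dimensional convergence together with tightness gives weak convergence, and the limit must be $h$ since it already is the limit of the finite-dimensional distributions. To be careful about the indexing clash, I would note that $h_N(z)=\sum_{n,k}\gamma_{n,k}^{(N)}e_{n,k}(z)$ with $\overline{\gamma_{n,k}^{(N)}}=\gamma_{-n,k}^{(N)}$, so that $h_N$ is genuinely an $\mathcal{H}^{-s}$-valued random variable for $s>2$, and $\|h_N\|_{-s}^2=\sum_{n\in\Z,k\in\Z_+}|\gamma_{n,k}^{(N)}|^2 j_{n,k}^{-2s}$.

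The tightness criterion I would use is the standard one for Hilbert-space-valued random variables: it suffices to produce, for every $\delta>0$, a finite-dimensional (equivalently, "finite rank / small tail") subspace such that the mass outside it is uniformly small. Concretely, writing $P_M$ for the projection onto the span of $\{e_{n,k}: n^2+k^2\le M\}$, I would show $\sup_N \E(\|(I-P_M)h_N\|_{-s}^2)\to 0$ as $M\to\infty$, together with the (automatic, finite-dimensional) fact that the image of each fixed $P_M h_N$ is tight. The first statement follows immediately from Lemma \ref{le:varbound}:
\begin{equation}
\E\big(\|(I-P_M)h_N\|_{-s}^2\big)=\sum_{\stackrel{n\in\Z,k\in\Z_+}{n^2+k^2>M}}\E\big(|\gamma_{n,k}^{(N)}|^2\big)j_{n,k}^{-2s}\le C\sum_{\stackrel{n\in\Z,k\in\Z_+}{n^2+k^2>M}}j_{n,k}^{2-2s},
\end{equation}
and by \eqref{zeroineq} the series $\sum j_{n,k}^{2-2s}\le C'\sum (n^2+k^2)^{1-s}$ converges for $s>2$, so its tail over $n^2+k^2>M$ tends to $0$ as $M\to\infty$, uniformly in $N$. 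For the finite-dimensional part, Corollary \ref{cor:linstat} already shows that $P_M h_N$ converges in law, hence in particular each coordinate family $(\gamma_{n,k}^{(N)})_{n^2+k^2\le M}$ is tight in a finite-dimensional space; alternatively one notes $\E(\|P_M h_N\|_{-s}^2)\le C\sum_{n^2+k^2\le M}j_{n,k}^{2-2s}<\infty$ uniformly in $N$ and applies Chebyshev on the finite-dimensional image, which is locally compact.

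Assembling these: given $\delta>0$, choose $M$ with $\sup_N\E(\|(I-P_M)h_N\|_{-s}^2)<\delta^2\eta$ (so that outside a ball of radius $\delta$ the tail mass is $<\eta$ by Markov), and choose a compact set $K_M$ in the finite-dimensional subspace $P_M\mathcal{H}^{-s}$ capturing all but $\eta$ of the mass of $P_M h_N$ uniformly in $N$; then $\{x: P_M x\in K_M,\ \|(I-P_M)x\|_{-s}\le\delta\}$ has a precompact closure (being bounded in a "finite-dimensional-plus-small-tail" sense — this is exactly the characterization of precompact sets in $\ell^2$-type Hilbert spaces) and carries all but $2\eta$ of the mass of $h_N$ for every $N$. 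Since $\delta,\eta$ are arbitrary, this is tightness. Then invoke Prohorov (as in \cite{merkle}) to get a weakly convergent subsequence, identify its limit as $h$ via the already-proven convergence of finite-dimensional distributions, and conclude that the full sequence $h_N$ converges weakly to $h$. The only mild subtlety — and the one place to be slightly careful rather than genuinely hard — is the uniform-in-$N$ control of the finite-dimensional marginals; but this is immediate from either Corollary \ref{cor:linstat} or the uniform second-moment bound of Lemma \ref{le:varbound}, so there is no real obstacle, and the proof is essentially a packaging of the tail estimate already used in the previous proposition.
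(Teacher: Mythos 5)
Your overall strategy (finite-dimensional convergence plus tightness, then Prohorov) is the same as the paper's, and your key estimate $\sup_N\E\bigl(\|(I-P_M)h_N\|_{-s}^2\bigr)\le C\sum_{n^2+k^2>M}j_{n,k}^{2-2s}\to 0$ is correct and is exactly the right use of Lemma \ref{le:varbound} and \eqref{zeroineq}. However, the step where you assemble the compact set has a genuine flaw: the set $\{x:\ P_Mx\in K_M,\ \|(I-P_M)x\|_{-s}\le\delta\}$ for a \emph{single} fixed $M$ and $\delta>0$ is \emph{not} precompact in $\mathcal{H}^{-s}$. Its closure contains the whole closed ball of radius $\delta$ in the infinite-dimensional subspace $(I-P_M)\mathcal{H}^{-s}$, and the $\ell^2$-type characterization of precompactness you invoke requires the tails to be uniformly small \emph{at every truncation level}, i.e.\ $\lim_{M'\to\infty}\sup_{x\in K}\|(I-P_{M'})x\|_{-s}=0$, not merely a bound $\le\delta$ at one level. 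The fix is standard and your tail estimate is strong enough to carry it out: given $\eta>0$, pick $M_j\uparrow\infty$ and $\delta_j\downarrow 0$ with $\sup_N\Prob\bigl(\|(I-P_{M_j})h_N\|_{-s}>\delta_j\bigr)\le \eta 2^{-j}$ (Chebyshev plus your uniform second-moment bound), add a global norm bound, and take $K=\{x:\ \|x\|_{-s}\le R,\ \|(I-P_{M_j})x\|_{-s}\le\delta_j\ \forall j\}$; this set is precompact and carries mass $\ge 1-2\eta$ uniformly in $N$. So the gap is in packaging, not in the estimates.

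It is worth comparing with the paper's route, which avoids this bookkeeping entirely: instead of working with projections inside $\mathcal{H}^{-s}$, it fixes $2<s'<s$ and uses the compactness of the embedding $\mathcal{H}^{-s'}\hookrightarrow\mathcal{H}^{-s}$ (a ball $\{\|\phi\|_{-s'}\le C/\epsilon\}$ is compact in $\mathcal{H}^{-s}$), so a single Chebyshev bound $\E\bigl(\|h_N\|_{-s'}^2\bigr)=\sum_{n,k}\E\bigl(|\gamma_{n,k}^{(N)}|^2\bigr)j_{n,k}^{-2s'}\le C\sum_{n,k}j_{n,k}^{2-2s'}<\infty$, uniform in $N$, immediately gives tightness. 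Your approach, once repaired as above, works entirely within a single space and uses only convergence of $\sum j_{n,k}^{2-2s}$ for the given $s$, while the paper's needs the slightly stronger summability at some $s'<s$ (harmless here since $s>2$ is open); both rest on the same variance bound, and the compact-embedding argument is simply the cleaner way to convert it into tightness.
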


\begin{proof}
As mentioned, as we know convergence of finite dimensional distributions, it is enough to prove tightness of the sequence $(h_N)_N$ in $\mathcal{H}^{-s}$. As in \cite{fks}, we make use of the fact (which one can prove again e.g. by a minor modification of the proof of Theorem 8.3 in \cite{kress}) that for $2<s'<s$, the ball 

\begin{equation}
K_\epsilon=\left\lbrace \phi\in \mathcal{H}^{-s'}: ||\phi||_{-s'}\leq \frac{C}{\epsilon}\right\rbrace
\end{equation}

\noindent is compact in $\mathcal{H}^{-s}$. One then has by Chebyshev's inequality and our variance bound

\begin{align}
\notag \mathbb{P}(h_N\in K_\epsilon)&=1-\mathbb{P}\left(||h_N||_{-s'}>\frac{C}{\epsilon}\right)\\
\notag &\geq 1-\frac{\epsilon^{2}}{C^{2}}\E\left(||h_N||_{-s'}^{2}\right)\\
&=1-\frac{\epsilon^{2}}{C^{2}}\sum_{n\in \Z,k\in \Z_+}\E\left(\left|\gamma_{n,k}^{(N)}\right|^{2}\right)j_{n,k}^{-2s'}\\
\notag &\geq 1-C'\epsilon^{2}\sum_{n\in \Z,k\in \Z_+}j_{n,k}^{2-2s'}\\
&\notag \geq 1-\widetilde{C}(s') \epsilon^{2},
\end{align}

\noindent where we again used the fact that for $s'>2$, $\sum_{n\in \Z,k\in \Z_+}j_{n,k}^{2-2s'}$ converges. Thus we have tightness as well as weak convergence.

\end{proof}

\begin{remark} As we already pointed out, the space where the proof guarantees convergence in depends on the bounds one has for $\E(|\gamma_{n,k}^{(N)}|^{2})$. We expect that our bound of the form $C j_{n,k}^{2}$ is nowhere near optimal. To motivate this, consider the situation where $|n|\geq N$.

\vspace{0.3cm}

Another way to write the variance of $\gamma_{n,k}^{(N)}$ is 

\begin{align}
\notag &\E\left(\left|\gamma_{n,k}\right|^{2}\right)=\frac{N}{\pi}\int_\C |\alpha_{n,k}(z)|^{2}\sum_{k=0}^{N-1}\frac{(N|z|^{2})^{k}}{k!}e^{-N|z|^{2}}d^{2}z\\
&-\left(\frac{N}{\pi}\right)^{2}\int_{\C\times \C}\alpha_{n,k}(z)\overline{\alpha_{n,k}(w)}\left|\sum_{k=0}^{N-1}\frac{(Nz\overline{w})^{k}}{k!}\right|^{2}e^{-N|z|^{2}-N|w|^{2}}d^{2}zd^{2}w.
\end{align}

Recall that if $z=re^{i\theta}$, then the angular dependence of $\alpha_{n,k}(z)$ is $e^{-in\theta}$. The angular part of $\left|\sum_{k=0}^{N-1}\frac{(Nz\overline{w})^{k}}{k!}\right|^{2}$ has only terms of the form $e^{im\theta}e^{im'\phi}$ ($\theta$ and $\phi$ being the phases of $z$ and $w$) with $-N+1\leq m,m'\leq N-1$ so we see that the double integral vanishes when $|n|\geq N$. We are thus left with estimating 

\begin{equation}
\frac{N}{\pi}\int_\C |\alpha_{n,k}(z)|^{2}\sum_{k=0}^{N-1}\frac{(N|z|^{2})^{k}}{k!}e^{-N|z|^{2}}d^{2}z.
\end{equation}

We split this into an integral over $|z|<1$ and $|z|\geq 1$. In the $|z|<1$ case we note that $\sum_{k=0}^{N-1}\frac{(N|z|^{2})^{k}}{k!}e^{-N|z|^{2}}\leq 1$ and 

\begin{align}
\notag\int_{|z|<1}|\alpha_{n,k}(z)|^{2}d^{2 }z&=\frac{4\pi^{2}}{j_{n,k}^{4}}\int_{|z|<1}|e_{n,k}(z)|^{2}d^{2}z+\frac{4\pi\sqrt{\pi}}{j_{n,k}^{3}|n|}\int_{|z|<1}e_{|n|,k}(z)\overline{z}^{|n|}d^{2}z\\
\notag &+\frac{\pi}{j_{n,k}^{2}|n|^{2}}\int_{|z|<1}|z|^{2|n|}d^{2}z\\
&\leq \frac{C}{j_{n,k}^{2}|n|^{2}}.\notag
\end{align}

For $|z|>1$, we note that $|\alpha_{n,k}(z)|\leq \frac{C}{j_{n,k}|n|}$ so 

\begin{align}
&\notag\frac{N}{\pi}\int_{|z|>1} |\alpha_{n,k}(z)|^{2}\sum_{k=0}^{N-1}\frac{(N|z|^{2})^{k}}{k!}e^{-N|z|^{2}}d^{2}z\\
&\leq \frac{C N}{j_{n,k}^{2}|n|^{2}}\sum_{k=0}^{N-1}\frac{N^{k}}{k!}\int_{\C}|z|^{2k}e^{-N|z|^{2}}d^{2}z\\
&=\frac{C' N}{j_{n,k}^{2}|n|^{2}}\notag.
\end{align}

We conclude that for $|n|\geq N$, one has

\begin{equation}
\mathbb{E}\left(\left|\gamma_{n,k}^{(N)}\right|^{2}\right)\leq C\frac{1}{|n|j_{n,k}^{2}}
\end{equation}

\noindent which suggests that in general one has decay instead of growth and one could probably improve our results in terms of the roughness of the space of distributions.
\end{remark}

\end{document}